%% file: ex_article.tex
\documentclass{siamart251216}

\input{ex_shared}

\ifpdf
\hypersetup{
  pdftitle={Energy stability of explicit--implicit--null time discretizations for nonlinear degenerate parabolic equations with state-dependent coefficients},
  pdfauthor={Yue Wu and Chi-Wang Shu}
}
\fi

\begin{document}

\maketitle

% REQUIRED
\begin{abstract}
We study nonlinear degenerate parabolic equations with state-dependent coefficients admitting a scalar energy potential. The explicit--implicit--null (EIN) method avoids severe time step restrictions of explicit time stepping and costly Newton--Raphson iterations required by a fully implicit treatment of the nonlinearity. Using orthogonal projection of the Kirchhoff transform, we prove nonlinear energy stability for first- to third-order EIN time stepping coupled with a wide range of spatial discretizations that admit self-adjoint positive semidefinite discrete operators. Under an upper bound for the diffusion coefficient on relevant stage segments, two IMEX coefficient conditions yield sufficient stabilization bounds without any time step restriction. The estimates dissipate the original convex energy and allow degenerate diffusion. We also extend the analysis to diffusion systems with a scalar energy potential in a fixed positive definite metric, and to consistent positive-weight quadrature discretizations. Applications include porous-medium flow, nonlinear heat transfer in plasma, and multicomponent diffusion.
\end{abstract}

% REQUIRED
\begin{keywords}
degenerate parabolic equations, 
nonlinear diffusion, 
explicit--implicit--null time stepping, 
IMEX Runge--Kutta, 
energy stability, 
porous medium equation, 
nonlinear heat transfer 
\end{keywords}

% REQUIRED
\begin{MSCcodes}
65M12, 65L06, 65L04, 35K65
\end{MSCcodes}

\section{Introduction}\label{sec:objective}

\subsection{The model problem and energy stability}

Consider the initial boundary value problem of the nonlinear degenerate parabolic partial differential equation (PDE) with homogeneous Dirichlet boundary condition 
\begin{subequations}\label{eq:ivbp}
	\begin{alignat}{2}
		u_t - \nabla \cdot (a(u) \nabla u) &= 0, &\quad& (\mathbf{x}, t) \in \Omega \times (0,T], \label{eq:pde} \\
		u(0, \mathbf{x}) &= u_0(\mathbf{x}), &\quad& \mathbf{x} \in \Omega, \\
		u(t, \mathbf{x}) &= 0, &\quad& (\mathbf{x}, t) \in \partial\Omega \times (0,T]. \label{eq:bc}
	\end{alignat}
\end{subequations}
where \(\Omega \subset \mathbb R^d\) is a bounded Lipschitz domain and \(u: (0, T] \times \Omega \mapsto \mathbb R\) is the space-time unknown. The state-dependent coefficient function \(a \in C(\R, [0, +\infty))\) may have lower bound zero. The non-decreasing Kirchhoff transformation \cite{Kirchhoff1894} \(\phi \in C^1(\R, \R)\) defined by 
\begin{equation}
	\phi(u) \coloneqq \int_0^u a(v) \, \d v 
\end{equation}
turns the original PDE~\eqref{eq:pde} into
\begin{equation}\label{eq:Kirchhoff-transformed-PDE}
	u_t - \Delta \phi(u) = 0.
\end{equation}
The scalar energy potential \(\psi \in C^2(\R, [0, +\infty))\) given by 
\begin{equation}
	\psi(v) \coloneqq \int_{0}^v \phi(s) \, \d s
\end{equation}
satisfies $\psi' = \phi$ and $\psi'' = a \geq 0$. Since $\psi(0) = \psi'(0) = 0$, convexity also gives $\psi(v) \geq 0$. Define the convex energy functional
\begin{equation}\label{eq:nonlinearenergy}
	E(v) \coloneqq \int_\Omega \psi(v(\mathbf{x})) \, \d \mathbf{x}
\end{equation}
for any admissible function $v: \Omega \mapsto \R$ for which the integral is finite. Testing \eqref{eq:Kirchhoff-transformed-PDE} with \(\phi(u)\) and using integration by parts, any smooth solution \(u\) of \eqref{eq:ivbp} will satisfy 
\begin{subequations}
	\begin{equation}\label{eq:single-step-stability}
		\frac{\d}{\d t} E(u)
		=-\norm{\nabla\phi(u)}_{L^2(\Omega)}^2
		\leq 0, 
	\end{equation}
	and a time integral gives
	\begin{equation}\label{eq:space-time-stability}
		E(u(T)) + \norm{\nabla \phi(u)}_{L^2((0, T], L^2(\Omega))}^2
		= E(u_0).
	\end{equation}
\end{subequations}
We refer to discrete counterparts of \eqref{eq:single-step-stability} as one-step energy stability, and to those of \eqref{eq:space-time-stability} as space-time energy stability. 

The Kirchhoff formulation has been used in high-order discretizations of nonlinear degenerate diffusion equations. Liu, Shu, and Zhang \cite{10.1137/100791002} constructed finite difference weighted essentially nonoscillatory (WENO) schemes by discretizing the second derivative of the diffusion potential. Zhang, Liu, and Shu \cite{10.1137/110839230} developed maximum-principle-satisfying finite volume WENO schemes based on double cell averages. Arbogast, Huang, and Zhao \cite{10.1016/j.jcp.2019.108921} used the Kirchhoff transformation to construct finite volume WENO schemes on non-uniform meshes, with both explicit and implicit Runge--Kutta (RK) time discretizations. More recently, Yu, Cheng, Liu, and Shu \cite{10.1090/mcom/4167} established a Lax--Wendroff type theorem for doubly conservative schemes and proposed a local discontinuous Galerkin (LDG) discretization with semi-discrete $L^2$ stability in the degenerate case. Yu, Cheng, and Shu \cite{YCS2026DoCFV} further developed high-order doubly conservative finite volume WENO schemes on non-uniform and unstructured triangular meshes.

Nonlinear diffusion occurs in porous-medium flow, unsaturated flow, and heat transfer with conductivity that depends on temperature \cite{10.1063/1.1745233,10.1063/1.1745010,10.1103/PhysRev.89.977}. Explicit time discretization of a diffusion operator requires a time step that scales quadratically with the minimal spatial mesh size. Fully implicit discretization removes this parabolic stability restriction, but generally requires complicated Newton--Raphson or Picard iterations at each time step. The cost of repeatedly solving nonlinear systems can be substantial for high-order spatial discretizations and coupled problems, and the convergence of discrete nonlinear iterations may also be an issue.

\subsection{The EIN time discretization}

The EIN method for nonlinear diffusion treats the stiffness by adding and subtracting a linear diffusion term. For a constant \(a_0 \geq 0\), rewrite \eqref{eq:Kirchhoff-transformed-PDE} as 
\begin{equation}\label{eq:pde-ein-splitting}
	u_t = a_0 \Delta u + \Delta \{\phi(u) - a_0 u\}.
\end{equation}
For time discretizations, the first term is treated implicitly and the second explicitly. Thus the nonlinear function is evaluated at known old states, while each implicit stage requires only a symmetric positive definite linear solve. For the constant-coefficient model, suitable stabilization removes the parabolic time step restriction in the energy estimate \cite{10.1007/s11425-018-9524-x}. The method needs neither Newton--Raphson nor Picard iterations. Generally speaking, EIN is an application of an implicit--explicit (IMEX) method to the splitting \eqref{eq:pde-ein-splitting}; see \cite{10.1137/0732037,10.1016/S0168-92749700056-1} for the IMEX framework.

The addition and subtraction of linear diffusion appeared in the alternating-direction Galerkin methods of Douglas and Dupont \cite{10.1016/B978-0-12-358502-8.50009-8}. Duchemin and Eggers \cite{10.1016/j.jcp.2014.01.013} formulated the EIN approach for stiff PDEs and explained how a simple implicit linear operator can remove the restrictive stability constraint of an explicit calculation. Wang, Zhang, Wang, and Shu \cite{10.1007/s11425-018-9524-x} combined EIN time discretization with LDG \cite{10.1137/S0036142997316712} for nonlinear diffusion. Their stability and error analysis treats a constant-coefficient model, while their numerical examples include nonlinear coefficients. Related developments include IMEX-LDG methods for convection--diffusion \cite{10.1007/s10915-019-01072-4}, EIN finite difference, LDG, and spectral collocation schemes \cite{10.1016/j.jcp.2022.111314,10.4208/eajam.2022-271.090123}, variable-coefficient EIN stabilization \cite{10.1007/s42967-023-00359-6}, and EIN direct DG methods \cite{10.4208/nmtma.OA-2024-0076}. An application to the Richards equation is given in \cite{10.1016/j.compgeo.2026.108315}. 

Nonlinear stability of the LDG spatial discretization was established by Cockburn and Shu \cite[Sections 2.3 and 3]{10.1137/S0036142997316712}. For scalar nonlinear convection--diffusion equations with periodic boundary conditions, suitable numerical fluxes yield a semi-discrete $L^2$ estimate controlling the auxiliary diffusive variable and nonnegative interface dissipation. Their analysis allows nonnegative, possibly degenerate diffusion. In the isotropic case, the first-order reformulation uses the primitive $g$ with $g'=\sqrt{a}$, rather than the Kirchhoff transform $\phi'=a$. The estimate dissipates the quadratic energy $\frac{1}{2}\norm{u_h}_{L^2}^2$ in continuous time. The present analysis concerns fully discrete EIN dissipation of the nonlinear energy $E_h$ defined through $\psi'=\phi$.

Wang and Leung \cite{10.1016/j.cnsns.2024.108094} combine EIN with a partially explicit multiscale splitting for spatially heterogeneous, high-contrast nonlinear diffusion, with coefficient $\kappa(x,u)=\kappa_x(x)\kappa_u(u)$. Their stability analysis gives a Gronwall bound for a coefficient-weighted gradient quantity together with time increments, and the bound contains an exponential factor in time. The proof uses strict positivity and inverse bounds for the nonlinear diffusion factor, and does not cover degenerate diffusion.

\subsection{Our contribution}

We prove fully discrete nonlinear energy stability for first- to third-order EIN methods of \eqref{eq:ivbp}, coupled with a wide range of spatial discretizations. The estimates dissipate the discrete version of the convex energy \(E(u)\). The analysis uses a self-adjoint positive semidefinite discrete elliptic operator and the $L^2$ projection onto the approximation space. This includes symmetric continuous Galerkin (CG) methods \cite{10.1016/S0168-2024_08_70184-X}, mixed finite element methods (mixed FEM) \cite{10.1007/BFb0064470, 10.1007/BF01389710}, Bassi--Rebay 1 (BR1) \cite{10.1006/jcph.1996.5572}, Bassi--Rebay 2 (BR2) \cite{BassiEtAl1997BR2}, the method of Brezzi et al. \cite{BrezziEtAl1999Diffusion}, LDG \cite{10.1137/S0036142997316712}, symmetric interior penalty DG (SIPG) \cite{10.1137/0719052}, ultra-weak DG (UWDG) \cite{10.1090/S0025-5718-07-02045-5,10.1090/mcom/3927}, symmetric direct DG (DDG) \cite{10.4208/jcm.1307-m4273}, hybridizable DG (HDG) \cite{10.1090/S0025-5718-08-02123-6}, and weak Galerkin (WG) methods \cite{10.1016/j.cam.2012.10.003}, as well as spectral Galerkin methods \cite{10.1137/0915089}, mixed virtual element methods (mixed VEM) \cite{10.1051/m2an/2013138, 10.1051/m2an/2015067}, and symmetric two-point flux finite volume methods \cite{10.1016/S1570-86590007005-8}. Unified frameworks for the analysis and hybridization of these finite element methods are provided in \cite{10.1137/S0036142901384162, 10.1137/070706616}. Legendre--Gauss--Lobatto (LGL) and Fourier collocation \cite{10.1007/978-3-540-71041-7}, as well as centered finite differences \cite[Sections 12.5 and 13.2]{10.1137/1.9780898717938} and self-adjoint positive semidefinite diagonal-norm summation-by-parts (SBP) realizations \cite{10.1016/j.jcp.2004.03.001}, are covered by the quadrature-inner-product version described in \cref{rem:quadrature}. 

Following the stabilized IMEX analysis of \cite{10.1090/mcom/3950,10.4208/csiam-am.SO-2025-0095}, we reduce the energy estimate to two coefficient conditions,
\begin{equation}
	\sym H_0 \succeq 0, \quad
	\sym Q \succeq q I, \quad q>0.
\end{equation}
These conditions give energy stability for $a_0 \geq \frac{a_{\mathrm{max}}}{2q}$, where $a_{\mathrm{max}}$ bounds $a(\cdot)$ on the required state segments. Under this coefficient bound, all energy estimates are unconditional with respect to the time step and require no positive lower bound on the nonnegative coefficient $a(\cdot)$. A positive lower bound for $\sym H_0$ also controls the dissipation but is not necessary for the general stability result. We apply this criterion to an EIN1 scheme, the EIN2 scheme of \cite{10.1007/s11425-018-9524-x}, the EIN2-LWW scheme, and two EIN3 schemes. The proof also extends to systems of equations admitting a scalar energy potential in a fixed positive definite metric $H$, with $H A(\mathbf{u}) = \nabla^2\Psi(\mathbf{u})$. Consistent positive-weight quadrature and homogeneous Neumann or periodic boundary conditions can be treated as described in \cref{rem:quadrature}.

\Cref{sec:space,sec:stagecriterion} introduce the spatial setting and the general energy estimate. \Cref{sec:energy} treats the first- to third-order schemes. Generalizations to the system case, quadrature-based numerical implementations and other boundary conditions are discussed in \cref{sec:generalizations}. Some practical applications are listed in \cref{sec:applications}, followed by concluding remarks in \cref{sec:summary}.

\section{Spatial discretization}\label{sec:space}

Let $\mathcal T_h$ be a suitable mesh partition of $\Omega$, with mesh size denoted by $h$, and let $V_h\subset L^2(\Omega)\cap L^\infty(\Omega)$ be a finite-dimensional approximation space. The homogeneous Dirichlet boundary condition is incorporated in the discrete operator and, when appropriate, the space. No specific choice of basis, element shape, or local polynomial degree is needed in the analysis below. In particular, compatible nonmatching meshes and different local approximation spaces may be used. All spatial integrals are exact until \cref{rem:quadrature}. Write
\begin{equation*}
	(v,w)=\int_\Omega vw\,\d \mathbf{x},\quad \|v\|^2=(v,v),
\end{equation*}
with the Euclidean dot product for vector arguments. Denote the orthogonal projection onto $V_h$ by $\PV$.

\subsection{Discrete Laplacian}

Let $L_h: V_h \mapsto V_h$ approximate the Laplacian, and set \(K_h \coloneqq -L_h\). We assume self-adjointness and positive semidefiniteness,
\begin{equation}\label{eq:psd}
	K_h^* = K_h, \quad
	(K_h v_h, v_h) \geq 0 \quad \forall v_h \in V_h. 
\end{equation}
For example, a symmetric discrete elliptic form $b_h: V_h \times V_h \mapsto \R$ defines this operator by 
\begin{equation*}
  (K_h v_h, w_h) = b_h(v_h, w_h), \quad \forall v_h, w_h \in V_h.
\end{equation*}
We may choose from symmetric realizations of CG, mixed FEM, BR1, BR2, the Brezzi et al. method, LDG, SIPG, UWDG, DDG, HDG, or WG, with suitable numerical fluxes and stabilization parameters; auxiliary flux and trace variables must be eliminated before defining $K_h$, i.e., the primal operator. The constructions and relations among these elliptic methods are described in \cite{10.1137/S0036142901384162, 10.1137/070706616}. The assumptions \eqref{eq:psd}, rather than a particular realization, are used in the proofs. The dissipation, which can be viewed as a discrete counterpart of the squared \(H_0^1\) seminorm, is
\begin{equation}\label{eq:highdissipation}
	\mathcal D_h(v_h) \coloneqq (K_h v_h, v_h) \geq 0, \quad v_h\in V_h.
\end{equation}

\subsection{Spatial discretization using projected Kirchhoff transform}

The semi-discrete scheme for \eqref{eq:pde-ein-splitting} is
\begin{equation}
	\partial_t u_h = a_0 L_h u_h + L_h \{ \PV\phi(u_h) - a_0 u_h \}.
\end{equation}
The nonlinear spatial term is $L_h\PV\phi(u_h)$, with the Galerkin methods listed above used to construct $L_h$. The same fixed operator is used for both parts of the EIN splitting. The first-order method, for example, is 
\begin{equation}\label{eq:EIN1}
	\frac{u_h^{n+1} - u_h^n}{\tau} = L_h W_h^{n+1}, \quad
	W_h^{n+1} = \PV\phi(u_h^n) + a_0(u_h^{n+1} - u_h^n),
\end{equation}
or equivalently
\begin{equation}\label{eq:ein}
	(I + \tau a_0 K_h) u_h^{n+1}
	= u_h^n - \tau K_h \{\PV\phi(u_h^n) - a_0 u_h^n\}.
\end{equation}
Every implicit stage requires inversion of $I + \gamma \tau a_0 K_h$ for some stage coefficient $\gamma > 0$, which is positive definite for every $\tau \geq 0$. The projection evaluates the nonlinear function at a known state and applies a fixed linear operator. Thus no nonlinear iteration is required.

\subsection{Discrete energy functional}

Assume that $\psi\in C^2(I)$ on an open state interval $I$, $\psi'=\phi$, and $\psi''=a\geq0$. For the energy functional \eqref{eq:nonlinearenergy}, write $E_h:=E|_{V_h}$ for its restriction to the discrete approximation space $V_h$.

Consider $E_h$ on the open subset of $V_h$ consisting of functions whose essential ranges are compactly contained in $I$. The restriction is Fr\'echet differentiable, and its derivative satisfies
\begin{equation}\label{eq:energygradient}
	\langle D E_h(v_h),w_h\rangle_{V_h',V_h}
	= (\phi(v_h), w_h)
	= (\PV\phi(v_h), w_h), \quad v_h, w_h\in V_h.
\end{equation}
Here $\langle\cdot,\cdot\rangle_{V_h',V_h}$ denotes the natural duality pairing between $V_h'$ and $V_h$. Thus the $L^2$ gradient of $E_h$ is $\nabla_{V_h}E_h(v_h)=\PV\phi(v_h)$. Indeed, finite dimensionality and $V_h\subset L^\infty(\Omega)$ give a uniform bound on $a(\cdot)$ in a neighborhood of any such $v_h$. Taylor's remainder is therefore $O(\|w_h\|^2)$, proving Fr\'echet differentiability. In particular, \eqref{eq:energygradient} also gives the G\^ateaux derivative. Taking $w_h=\partial_t u_h$ yields
\begin{equation}\label{eq:semienergy}
	\frac{\d}{\d t} E_h(u_h)
	= -\mathcal D_h(\PV\phi(u_h)) \leq 0.
\end{equation}
Identity \eqref{eq:semienergy} is the semi-discrete counterpart of \eqref{eq:single-step-stability}. Integration over $(0,T]$ gives the corresponding space-time identity \eqref{eq:space-time-stability}.

For $v_h, d_h \in V_h$, with their pointwise segments in the state interval, the integrated Taylor formula is
\begin{equation}\label{eq:taylorexact}
	E_h(v_h + d_h) - E_h(v_h)
	= (\PV\phi(v_h), d_h) + \int_\Omega d_h(x)^2\int_0^1 (1-t) a(v_h(x) + t d_h(x)) \, dt \, \d \mathbf{x}.
\end{equation}
Consequently, if $0\leq a(v_h(x)+td_h(x))\leq a_{\mathrm{max}}$ for almost every $x\in\Omega$ and all $0\leq t\leq 1$, then
\begin{equation}\label{eq:taylorbound}
	E_h(v_h+d_h)-E_h(v_h)\leq(\PV\phi(v_h),d_h)+\frac{a_{\mathrm{max}}}{2}\|d_h\|^2.
\end{equation}
For unbounded coefficients \(a(\cdot)\), the bound \(a_{\mathrm{max}}\) must hold on all required pointwise state segments throughout $\Omega$.

\section{EIN time discretization and general energy stability}\label{sec:stagecriterion}

\subsection{Stage equations and elimination}

For an EIN time step from $u_h^n$ to $u_h^{n+1}$, set $u_h^{(0)}=u_h^n$ and $u_h^{n+1}=u_h^{(s)}$ and define the projected Kirchhoff transform
\begin{equation}
	\phi_h^{(j)} \coloneqq \PV\phi(u_h^{(j)}),\quad 0\leq j<s.
\end{equation}
Let $A^I = (a^I_{ij})_{i,j=0}^s$ and $A^E = (a^E_{ij})_{i,j=0}^s$ denote the coefficient arrays in the Butcher tabulae for the implicit and explicit parts of the IMEX-RK method, respectively. Both arrays include a row for the old state, indexed by zero. Row zero of each array is zero, and the last rows are the output weights. We assume the IMEX-RK methods considered here to have the following IMEX-DIRK structure. 

\begin{assumption}[IMEX-DIRK structure]\label{assume:IMEX-DIRK}
	We assume that the implicit array \((a^I_{ij})_{i,j=0}^s\) is lower-triangular with positive diagonals, i.e., \(a_{ii}^I > 0\), and that the explicit array \((a^E_{ij})_{i,j=0}^s\) is strictly lower-triangular (which always holds true for the definition of ``explicit") with non-negative sub-diagonals, i.e., \(a_{i,i-1}^E \neq 0\) for \(1\leq i\leq s\). 
\end{assumption}

In component form, the stage equations are
\begin{equation}\label{eq:generalimex-components}
	u_h^{(i)} = u_h^n - \tau K_h \left[a_0\sum_{j=0}^i a^I_{ij}u_h^{(j)} 
	+ \sum_{j=0}^{i-1} a^E_{ij} \left(\phi_h^{(j)}-a_0u_h^{(j)}\right)\right],
	\quad 1\leq i\leq s.
\end{equation}
We also write $\phi_h^n=\phi_h^{(0)}$. At stage $i$, the Kirchhoff transform is evaluated only at known states. The implicit solve uses the fixed linear operator $I+\tau a_0 a^I_{ii} K_h$. The temporal orders stated below refer to the classical order of the IMEX formula applied to the semi-discrete problem on a fixed mesh, assuming sufficient smoothness of its vector field and solution. The energy estimates require only the coefficient regularity and bounds specified in the corresponding theorems.

Collect the new states, preceding states, and preceding Kirchhoff transforms as
\begin{equation*}
	\begin{aligned}
		[u_h^\ast] & = (u_h^{(1)},\ldots,u_h^{(s)})^T,\\
		[u_h] & = (u_h^{(0)},\ldots,u_h^{(s-1)})^T,\\
		[\phi_h] & = (\phi_h^{(0)},\ldots,\phi_h^{(s-1)})^T.
	\end{aligned}
\end{equation*}
Let $\bm e$ denote the vector of ones in the temporal stage space, and set
\begin{equation*}
	\bm a_0^I=(a^I_{10},\ldots,a^I_{s0})^T,\quad
	B=(a^I_{ij})_{i,j=1}^s,\quad
	C=(a^E_{i,j-1})_{i,j=1}^s.
\end{equation*}
The stage equations can be written compactly as
\begin{equation}\label{eq:generalimex}
	[u_h^\ast] = \bm e u_h^n - \tau K_h \left\{ a_0\bm a_0^I u_h^n + a_0 B [u_h^\ast] + C\left([\phi_h] - a_0[u_h]\right)\right\},
\end{equation}
where $K_h$ acts componentwise on vectors of functions in $V_h$. Thus the $i$th entries of $[u_h^\ast]$ and $[u_h]$ are $u_h^{(i)}$ and $u_h^{(i-1)}$, respectively.

Define the stage increments and their vector by
\begin{equation*}
	d_h^{(i)} \coloneqq u_h^{(i)}-u_h^{(i-1)},\quad
	[d_h] = (d_h^{(1)},\ldots,d_h^{(s)})^T.
\end{equation*}
Let $T_{ij}=1$ for $j\leq i$ and zero otherwise, and define
\begin{equation}\label{eq:stageHQ}
	H_0 \coloneqq C^{-1} T, \quad Q \coloneqq (C^{-1} B - I) T + I.
\end{equation}
The current and preceding stage vectors satisfy $[u_h^\ast] = \bm e u_h^n + T [d_h]$ and $[u_h] = \bm e u_h^n + (T - I) [d_h]$. Equal row sums give $\bm a_0^I + B\bm e = C \bm e$, which cancels the old-state contribution of the added and subtracted diffusion. Multiplication by $C^{-1}$ gives
\begin{equation}\label{eq:stagebalance}
	H_0[d_h]=-\tau K_h\left([\phi_h]+a_0Q[d_h]\right).
\end{equation}
Define the stage potentials by
\begin{equation}\label{eq:stagepotentials}
	[z_h] \coloneqq H_0^{-1}\left([\phi_h]+a_0Q[d_h]\right),\quad
	[z_h] = (z_h^{(1)},\ldots,z_h^{(s)})^T.
\end{equation}
The inverse $H_0^{-1} = T^{-1}C$ acts only in the temporal stage space. These potentials are auxiliary quantities determined by the computed stages. Since temporal arrays commute with the componentwise spatial operator, \eqref{eq:stagebalance} and \eqref{eq:stagepotentials} yield
\begin{equation}\label{eq:stageelimination}
	\begin{aligned}
		d_h^{(i)}&=-\tau K_hz_h^{(i)},\\
		\phi_h^{(i-1)}&=\sum_{j=1}^s(H_0)_{ij}z_h^{(j)}
		-a_0\sum_{j=1}^sQ_{ij}d_h^{(j)},
	\end{aligned}
	\quad 1\leq i\leq s.
\end{equation}

\subsection{Energy theorem and accumulated dissipation}

For a real matrix $X$, write $\sym X=\frac{X + X^T}{2}$. 

\begin{theorem}[Energy stability for general EIN from IMEX-RK]\label{thm:stagecriterion}
	Let the EIN method for \eqref{eq:pde-ein-splitting} be generated from an IMEX-RK method with Butcher tabulae \(A^I\) and \(A^E\) and having auxiliary matrices \(H_0\) and \(Q\) defined by \eqref{eq:stageHQ}. Suppose
	\begin{equation}\label{assumption:stabilization-bounds}
		0\leq a(u_h^{(i-1)}(x)+t\,d_h^{(i)}(x))\leq a_{\mathrm{max}}
		\quad \text{for a.e. }x\in\Omega, \quad 0\leq t\leq1, \quad 1\leq i\leq s,
	\end{equation}
	and suppose the coefficient matrices satisfy
	\begin{equation}\label{eq:HQcondition}
		\sym H_0\succeq \eta I,\quad
		\sym Q\succeq qI,\quad \eta\geq0,\quad q>0.
	\end{equation}
	If $a_0\geq \frac{a_{\mathrm{max}}}{2q}$, then
	\begin{equation}\label{eq:generalenergy}
		\boxed{E_h(u_h^{n+1})+\tau\eta\sum_{i=1}^s\mathcal D_h(z_h^{(i)})
		+(a_0q-\frac{a_{\mathrm{max}}}{2})\sum_{i=1}^s\|d_h^{(i)}\|^2\leq E_h(u_h^n).}
	\end{equation}
\end{theorem}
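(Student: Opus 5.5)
The plan is to telescope the energy over the stages, using the Taylor bound \eqref{eq:taylorbound} on each stage increment and then substituting the elimination identities \eqref{eq:stageelimination}. Writing $E_h(u_h^{n+1})-E_h(u_h^n)=\sum_{i=1}^s\big(E_h(u_h^{(i)})-E_h(u_h^{(i-1)})\big)$, we apply \eqref{eq:taylorbound} with $v_h=u_h^{(i-1)}$ and $d_h=d_h^{(i)}$. This is legitimate by hypothesis \eqref{assumption:stabilization-bounds}. The result is
\begin{equation*}
E_h(u_h^{n+1})-E_h(u_h^n)\leq\sum_{i=1}^s(\phi_h^{(i-1)},d_h^{(i)})+\frac{a_{\mathrm{max}}}{2}\sum_{i=1}^s\|d_h^{(i)}\|^2 .
\end{equation*}

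Next, we replace $\phi_h^{(i-1)}$ using the second line of \eqref{eq:stageelimination}. This splits the sum into two pieces,
\begin{equation*}
\sum_{i,j}(H_0)_{ij}(z_h^{(j)},d_h^{(i)})\;-\;a_0\sum_{i,j}Q_{ij}(d_h^{(j)},d_h^{(i)}).
\end{equation*}
The second piece is a quadratic form in the stage vector. Expanding $[d_h]$ in an $L^2$-orthonormal basis of $V_h$ reduces it coefficientwise to real quadratic forms $\bm y^TQ\bm y=\bm y^T(\sym Q)\bm y$. Summing these gives the bound
\begin{equation*}
\sum_{i,j}Q_{ij}(d_h^{(j)},d_h^{(i)})\geq q\sum_i\|d_h^{(i)}\|^2 .
\end{equation*}
For the first piece, we insert $d_h^{(i)}=-\tau K_hz_h^{(i)}$ and rewrite it as $-\tau\sum_{i,j}(H_0)_{ij}(K_hz_h^{(j)},z_h^{(i)})$. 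Since $K_h$ is self-adjoint and positive semidefinite by \eqref{eq:psd}, the bilinear form $(v,w)\mapsto(K_hv,w)$ is a semi-inner product. We diagonalize $K_h$ in an $L^2$-orthonormal eigenbasis with eigenvalues $\lambda_k\ge0$, writing $z_h^{(j)}=\sum_k \zeta_{jk}\varphi_k$. The sum then becomes
\begin{equation*}
\sum_k\lambda_k\,\bm\zeta_k^T H_0\bm\zeta_k=\sum_k\lambda_k\,\bm\zeta_k^T(\sym H_0)\bm\zeta_k\geq\eta\sum_k\lambda_k|\bm\zeta_k|^2=\eta\sum_i\mathcal D_h(z_h^{(i)}).
\end{equation*}

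Combining the two pieces gives
\begin{equation*}
E_h(u_h^{n+1})-E_h(u_h^n)\leq-\tau\eta\sum_i\mathcal D_h(z_h^{(i)})-\Big(a_0q-\frac{a_{\mathrm{max}}}{2}\Big)\sum_i\|d_h^{(i)}\|^2,
\end{equation*}
which rearranges to \eqref{eq:generalenergy}. The hypothesis $a_0\ge a_{\mathrm{max}}/(2q)$ makes the last coefficient nonnegative, so every term on the left of \eqref{eq:generalenergy} is nonnegative. No restriction on $\tau$ enters at any point.

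The main obstacle is the careful index bookkeeping in the pairing. We must make sure that $\phi_h^{(i-1)}$ pairs with $d_h^{(i)}$ with the correct matrix orientation, $(H_0)_{ij}$ rather than its transpose. Only the symmetric part matters for both quadratic forms, so the orientation ultimately does not affect the bound. We also need to justify passing from the matrix inequalities on $\sym H_0$ and $\sym Q$ to the function-valued stage vectors. This requires the tensor-product argument with a common basis, and it is the step where the self-adjointness of $K_h$ is genuinely needed. Invertibility of $C$, hence of $H_0$, is implicit in the definition \eqref{eq:stageHQ} and follows from the nonzero subdiagonal entries in Assumption~\ref{assume:IMEX-DIRK}.
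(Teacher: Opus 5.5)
Your proof is correct and follows the paper's argument. You telescope the Taylor bound over the stages, substitute the elimination identities \eqref{eq:stageelimination}, and apply $\sym H_0\succeq\eta I$ and $\sym Q\succeq qI$ to the resulting stage quadratic forms; the only difference is cosmetic, since the paper uses the positive semidefinite Gram matrices $G_{ij}=(z_h^{(i)},K_hz_h^{(j)})$ and $M_{ij}=(d_h^{(i)},d_h^{(j)})$ where you use an orthonormal eigenbasis of $K_h$.
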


\begin{proof}
	Orthogonality of $\PV$ and \eqref{eq:taylorbound} give
	\begin{equation*}
		E_h(u_h^{(i-1)}+d_h^{(i)})-E_h(u_h^{(i-1)})\leq(\phi_h^{(i-1)},d_h^{(i)})+\frac{a_{\mathrm{max}}}{2}\|d_h^{(i)}\|^2.
	\end{equation*}
	After summation and substitution of \eqref{eq:stageelimination},
	\begin{equation}\label{eq:generalenergyexact}
		E_h(u_h^{n+1})-E_h(u_h^n)
		\leq -\tau\sum_{i,j}(H_0)_{ij}(z_h^{(i)},K_h z_h^{(j)})
		- \sum_{i,j}(a_0Q_{ij}-\frac{a_{\mathrm{max}}}{2}\delta_{ij})(d_h^{(i)},d_h^{(j)}).
	\end{equation}
	The matrices $G_{ij}=(z_h^{(i)},K_hz_h^{(j)})$ and $M_{ij}=(d_h^{(i)},d_h^{(j)})$ are symmetric positive semidefinite. Indeed, for any $\bm\xi\in\mathbb R^s$,
	\begin{equation*}
		\bm\xi^T G\bm\xi
		=\mathcal D_h\left(\sum_i\xi_i z_h^{(i)}\right)\geq0,
	\end{equation*}
	and the assertion for $M$ follows in the same way from the $L^2$ inner product. Thus only $\sym H_0$ and $\sym Q$ contribute, and
	\begin{equation*}
		\sum_{i,j}(H_0)_{ij}G_{ij}\geq\eta\sum_i\mathcal D_h(z_h^{(i)}),\quad
		\sum_{i,j}Q_{ij}M_{ij}\geq q\sum_i\|d_h^{(i)}\|^2.
	\end{equation*}
	These bounds prove \eqref{eq:generalenergy}. The proof requires only self-adjointness and positive semidefiniteness of $K_h$.
\end{proof}

Under the presumed stabilization bounds \eqref{assumption:stabilization-bounds}, all energy estimates below hold for arbitrary positive time steps and require no strictly positive lower bound on $a(\cdot) \geq 0$.

Estimate~\eqref{eq:generalenergy} is a one-step discrete counterpart of \eqref{eq:single-step-stability}: when $\eta>0$, it controls the dissipation of the stage potentials. The $L^2$ increment term is additional numerical dissipation. For variable steps $\tau_n > 0$, fixed $a_0$, and a common coefficient bound $a_{\mathrm{max}}$, let $d_h^{n,(i)}$ and $z_h^{n,(i)}$ denote the stage increments and potentials in time step $n$. Summation yields
\begin{equation}\label{eq:generalenergysum}
	E_h(u_h^N)+\sum_{n=0}^{N-1}\left\{
	\tau_n\eta\sum_i\mathcal D_h(z_h^{n,(i)})+
	(a_0 q - \frac{a_{\mathrm{max}}}{2})\sum_i\|d_h^{n,(i)}\|^2\right\}\leq E_h(u_h^0).
\end{equation}
This is the space-time discrete counterpart of \eqref{eq:space-time-stability}. If $\eta>0$, the stage-potential dissipation remains controlled at the threshold $a_0=\frac{a_{\mathrm{max}}}{2q}$.

This stage-matrix argument follows the stabilized IMEX energy analysis of \cite{10.1090/mcom/3950,10.4208/csiam-am.SO-2025-0095}. In the notation of \cite{10.4208/csiam-am.SO-2025-0095}, $D_E=H_0$ and $D_{EI}=Q-\frac{I}{2}$. Here the orthogonal projection gives the energy derivative directly, and Taylor's bound gives the coefficient $a_0q-\frac{a_{\mathrm{max}}}{2}$. The scalar $a_{\mathrm{max}}$ is an upper bound for the actual diffusion coefficient $a$ on the specified pointwise segments; no coefficient freezing is made in the scheme.

\section{Energy stability for first- to third-order EIN methods}\label{sec:energy}

\subsection{The EIN1 method}

The EIN1 scheme \eqref{eq:EIN1} is 
\begin{equation}\label{eq:highenergyupdate}
	\delta u_h=\tau L_hW_h^{n+1},\quad
	\delta u_h=u_h^{n+1}-u_h^n,\quad W_h^{n+1}=\phi_h^n+a_0\delta u_h.
\end{equation}

\begin{theorem}[Energy stability of EIN1]\label{thm:highenergy}
	Suppose $\phi, \psi$ are defined on the relevant state interval and
	\begin{equation}\label{eq:energycoefficient}
		0\leq a(u_h^n(x)+t\delta u_h(x))\leq a_{\mathrm{max}}
		\quad\text{for a.e. }x\in\Omega,\quad0\leq t\leq1.
	\end{equation}
	If $a_0\geq \frac{a_{\mathrm{max}}}{2}$, then
	\begin{equation}\label{eq:energyinequality}
		\boxed{
			E_h(u_h^{n+1}) + \tau\mathcal D_h(W_h^{n+1})
			+(a_0-\frac{a_{\mathrm{max}}}{2})\|\delta u_h\|^2 
			\leq E_h(u_h^n).
		}
	\end{equation}
\end{theorem}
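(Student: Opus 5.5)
The plan is to derive the estimate as the special case $s=1$ of \cref{thm:stagecriterion}, and also to verify it directly, since the one-stage calculation is short and makes the stage potential explicit. For EIN1 the tableaux are $a^I_{10}=0$, $a^I_{11}=1$ and $a^E_{10}=1$. Hence $B=(1)$, $C=(1)$ and $T=(1)$. From \eqref{eq:stageHQ} this gives $H_0=1$ and $Q=(1-1)\cdot1+1=1$, so \eqref{eq:HQcondition} holds with $\eta=1$ and $q=1$. The stage potential \eqref{eq:stagepotentials} is then $z_h^{(1)}=\phi_h^n+a_0 d_h^{(1)}=W_h^{n+1}$, and $d_h^{(1)}=\delta u_h$.

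For a direct proof, first apply the Taylor bound \eqref{eq:taylorbound} with $v_h=u_h^n$ and $d_h=\delta u_h$; this is allowed by assumption \eqref{eq:energycoefficient}. It gives
\begin{equation*}
E_h(u_h^{n+1})-E_h(u_h^n)\leq(\phi_h^n,\delta u_h)+\frac{a_{\mathrm{max}}}{2}\|\delta u_h\|^2 .
\end{equation*}
The key identity comes from writing $\phi_h^n=W_h^{n+1}-a_0\delta u_h$, so that
\begin{equation*}
(\phi_h^n,\delta u_h)=(W_h^{n+1},\delta u_h)-a_0\|\delta u_h\|^2 .
\end{equation*}
Then use the scheme \eqref{eq:highenergyupdate}, $\delta u_h=-\tau K_hW_h^{n+1}$, to obtain $(W_h^{n+1},\delta u_h)=-\tau(K_hW_h^{n+1},W_h^{n+1})=-\tau\mathcal D_h(W_h^{n+1})$. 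Substituting and rearranging yields \eqref{eq:energyinequality}. The coefficient $a_0-\frac{a_{\mathrm{max}}}{2}$ is nonnegative exactly when $a_0\geq\frac{a_{\mathrm{max}}}{2}$.

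No step is a genuine obstacle. The only point needing care is that the Taylor bound requires the pointwise segment from $u_h^n$ to $u_h^{n+1}$ to stay where $0\leq a\leq a_{\mathrm{max}}$, and this is precisely hypothesis \eqref{eq:energycoefficient}. The replacement of $\phi(u_h^n)$ by $\PV\phi(u_h^n)$ in the pairing is justified by orthogonality of the projection, as in \eqref{eq:energygradient}, because $\delta u_h\in V_h$. Self-adjointness of $K_h$ is not even needed here, only the definition of $\mathcal D_h$.
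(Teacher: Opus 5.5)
Your proposal is correct and follows the paper's proof. The paper likewise specializes \cref{thm:stagecriterion} to $s=1$ with $B=C=T=(1)$, $H_0=Q=(1)$, $\eta=q=1$, $d_h^{(1)}=\delta u_h$ and $z_h^{(1)}=\phi_h^n+a_0\delta u_h=W_h^{n+1}$. Your direct verification is the one-stage unrolling of that general argument, and your remark that self-adjointness of $K_h$ is not needed when $s=1$ is accurate, since the $1\times1$ Gram matrix is trivially symmetric.
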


\begin{proof}
	In the row-zero convention, the coefficient arrays of EIN1 are
	\begin{equation*}
		A^I=\begin{pmatrix}0&0\\0&1\end{pmatrix},\quad
		A^E=\begin{pmatrix}0&0\\1&0\end{pmatrix}.
	\end{equation*}
	Thus $B=C=T=(1)$, and \eqref{eq:stageHQ} gives $H_0=Q=(1)$. The conditions of \cref{thm:stagecriterion} hold with $\eta=q=1$ and $d_h^{(1)}=\delta u_h$. Definition~\eqref{eq:stagepotentials} gives
	\begin{equation}\label{eq:ein1-stagepotential}
		z_h^{(1)}=\phi_h^n+a_0\delta u_h=W_h^{n+1}.
	\end{equation}
	Substitution into \eqref{eq:generalenergy} proves \eqref{eq:energyinequality}.
\end{proof}

\begin{remark}
	Estimate~\eqref{eq:energyinequality} has the form of \eqref{eq:single-step-stability}, with $W_h^{n+1}$ as the EIN approximation of the Kirchhoff transform.
\end{remark}

\begin{remark}[Coefficient bounds]
	A fixed global bound $0\leq a\leq a_{\mathrm{max}}$ makes the choice $a_0\geq \frac{a_{\mathrm{max}}}{2}$ independent of any state-range estimate. For an unbounded coefficient, such as a porous-medium coefficient, the bound must be valid along the old-to-new state segments throughout each element. The energy estimate does not by itself establish that range.
\end{remark}

\begin{remark}[Accumulated dissipation and initial-data requirements]
	For fixed $a_0 \geq \frac{a_{\mathrm{max}}}{2}$ and a common bound $a_{\mathrm{max}}$, summing the estimate gives
	\begin{equation}\label{eq:terminalkirchhoff}
		E_h(u_h^N) + \sum_{n=0}^{N-1}\tau\mathcal D_h(W_h^{n+1}) + (a_0-\frac{a_{\mathrm{max}}}{2})\sum_{n=0}^{N-1}\|u_h^{n+1}-u_h^n\|^2\leq E_h(u_h^0).
	\end{equation}
	The energy dissipation in \eqref{eq:terminalkirchhoff} corresponds to \eqref{eq:space-time-stability}. For variable steps, replace each $\tau$ by $\tau_n$. If $\mathcal W_h(t)=W_h^{n+1}$ on $(t_n,t_{n+1}]$, the first sum equals $\int_0^T\mathcal D_h(\mathcal W_h(t))\,dt$.
\end{remark}

\subsection{The standard EIN2 method}

For the standard EIN2 formula of \cite{10.1007/s11425-018-9524-x}, the stages are
\begin{equation}\label{eq:ein2stages}
	\begin{aligned}
		\frac{u_h^{(1)}-u_h^n}{\tau/2} & = L_h[\phi_h^n+a_0(u_h^{(1)}-u_h^n)],\\
		\frac{u_h^{n+1}-u_h^n}{\tau} & = L_h\left[\phi_h^{(1)}-a_0u_h^{(1)} + \frac{a_0}{2}(u_h^n+u_h^{n+1})\right].
	\end{aligned}
\end{equation}
Each solve uses the same invertible operator $I-\frac{\tau a_0}{2}L_h$.

In the row-zero convention of \eqref{eq:generalimex}, the coefficient arrays are
\begin{equation*}
	A^I=\begin{pmatrix}0&0&0\\0&\frac{1}{2}&0\\\frac{1}{2}&0&\frac{1}{2}\end{pmatrix},\quad 
	A^E=\begin{pmatrix}0&0&0\\\frac{1}{2}&0&0\\0&1&0\end{pmatrix}.
\end{equation*}
Their common abscissae are $(0,\frac{1}{2},1)^T$. 

\begin{theorem}[Energy stability of standard EIN2]\label{thm:ein2potential}
	Assume $0\leq a\leq a_{\mathrm{max}}$ on the two adjacent pointwise stage segments $[u_h^n(x),u_h^{(1)}(x)]$ and $[u_h^{(1)}(x),u_h^{n+1}(x)]$ for a.e. $x\in\Omega$. If 
	\begin{equation}\label{eq:originalein2sharp}
		a_0\geq\frac{2 a_{\mathrm{max}}}{3-\sqrt2} \approx 1.26120 a_{\mathrm{max}},
	\end{equation}
	then
	\begin{equation}\label{eq:originalein2energy}
		E_h(u_h^{n+1})
		+\frac{3-\sqrt2}{2}\tau\sum_{i=1}^2\mathcal D_h(z_h^{(i)})
		+\left(\frac{3-\sqrt2}{4}a_0-\frac{a_{\mathrm{max}}}{2}\right)
		\sum_{i=1}^2\|d_h^{(i)}\|^2\leq E_h(u_h^n).
	\end{equation}
\end{theorem}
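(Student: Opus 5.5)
The plan is to apply \cref{thm:stagecriterion} directly. The only work is to compute the auxiliary matrices $H_0$ and $Q$ from \eqref{eq:stageHQ} for the standard EIN2 tableaux and to find the smallest eigenvalues of their symmetric parts. Here $s=2$. The stage-segment hypothesis of the theorem is exactly \eqref{assumption:stabilization-bounds} for $i=1,2$, because $u_h^{(0)}+t\,d_h^{(1)}$ traverses $[u_h^n,u_h^{(1)}]$ and $u_h^{(1)}+t\,d_h^{(2)}$ traverses $[u_h^{(1)},u_h^{n+1}]$.

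First I read off the stage-space blocks from the row-zero convention:
\begin{equation*}
B=\begin{pmatrix}\frac12&0\\0&\frac12\end{pmatrix},\quad
C=\begin{pmatrix}a^E_{10}&a^E_{11}\\a^E_{20}&a^E_{21}\end{pmatrix}=\begin{pmatrix}\frac12&0\\0&1\end{pmatrix},\quad
T=\begin{pmatrix}1&0\\1&1\end{pmatrix}.
\end{equation*}
The equal-row-sum relation $\bm a_0^I+B\bm e=C\bm e$ holds with $\bm a_0^I=(0,\frac12)^T$, so the elimination leading to \eqref{eq:stagebalance} applies. Then
\begin{equation*}
H_0=C^{-1}T=\begin{pmatrix}2&0\\1&1\end{pmatrix},\quad
Q=(C^{-1}B-I)T+I=\begin{pmatrix}1&0\\-\frac12&\frac12\end{pmatrix}.
\end{equation*}

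Next I compute the symmetric parts:
\begin{equation*}
\sym H_0=\begin{pmatrix}2&\frac12\\\frac12&1\end{pmatrix},\quad
\sym Q=\begin{pmatrix}1&-\frac14\\-\frac14&\frac12\end{pmatrix}.
\end{equation*}
For a symmetric $2\times2$ matrix, the eigenvalues are $\frac{\mathrm{tr}}{2}\pm\sqrt{(\frac{\mathrm{tr}}{2})^2-\det}$. For $\sym H_0$ this gives $\frac{3\pm\sqrt2}{2}$, and for $\sym Q$ it gives $\frac{3\pm\sqrt2}{4}$. Hence \eqref{eq:HQcondition} holds with $\eta=\frac{3-\sqrt2}{2}$ and $q=\frac{3-\sqrt2}{4}>0$, and these are the sharp constants.

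The condition $a_0\geq\frac{a_{\mathrm{max}}}{2q}$ becomes exactly \eqref{eq:originalein2sharp}. Substituting $\eta$ and $q$ into \eqref{eq:generalenergy} gives \eqref{eq:originalein2energy}, where $z_h^{(i)}$ are the stage potentials of \eqref{eq:stagepotentials}.

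There is no real obstacle. The only point needing care is getting $C$ right: the column index is shifted, $C_{ij}=a^E_{i,j-1}$. Because $a^E_{20}=0$ and $a^E_{21}=1$, $C$ is diagonal, and that is what makes $H_0$ and $Q$ simple. I would double-check the eigenvalue arithmetic, since the constant $\frac{3-\sqrt2}{2}\approx0.793$ must match the stated threshold $1.26120\,a_{\mathrm{max}}$.
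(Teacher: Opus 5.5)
Your proposal is correct and matches the paper's proof: you obtain the same $H_0$ and $Q$ from \eqref{eq:stageHQ}, and the same eigenvalues $\frac{3\pm\sqrt2}{2}$ and $\frac{3\pm\sqrt2}{4}$ of their symmetric parts. You then apply \cref{thm:stagecriterion} with $\eta=\frac{3-\sqrt2}{2}$ and $q=\frac{3-\sqrt2}{4}$, as the paper does; your explicit checks of the row-sum identity and the identification of the stage segments fill in details the paper leaves implicit.
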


\begin{proof}
	Substitution of the arrays into \eqref{eq:stageHQ} gives
	\begin{equation*}
		H_0=\begin{pmatrix}2&0\\1&1\end{pmatrix},\quad
		Q=\begin{pmatrix}1&0\\-\frac{1}{2}&\frac{1}{2}\end{pmatrix}.
	\end{equation*}
	The eigenvalues of $\sym H_0$ are $\frac{3\pm\sqrt2}{2}$, and those of $\sym Q$ are $\frac{3\pm\sqrt2}{4}$. Hence \eqref{eq:HQcondition} holds with $\eta=\frac{3-\sqrt2}{2}$ and $q=\frac{3-\sqrt2}{4}$. \Cref{thm:stagecriterion} proves \eqref{eq:originalein2energy}.
\end{proof}

Estimate~\eqref{eq:originalein2energy} has the one-step form of \eqref{eq:single-step-stability}. Summing over time gives \eqref{eq:generalenergysum} with $\eta=\frac{3-\sqrt2}{2}$ and $q=\frac{3-\sqrt2}{4}$, and hence the space-time form of \eqref{eq:space-time-stability}.

\subsection{The EIN2-LWW method}

Use the Lobatto-type IERK2-2 family of \cite{10.4208/csiam-am.SO-2025-0095} with $c=\frac{1}{\sqrt2}$ and choose $\gamma=\frac{1+\sqrt2}{2}$. Its full arrays are
\begin{equation}\label{eq:betterein2tableau}
	A^I = \begin{pmatrix}
		0&0&0\\-\frac{1}{2}&\gamma&0\\2-\gamma&-1&\gamma
	\end{pmatrix},\quad
	A^E = \begin{pmatrix}
		0&0&0\\c&0&0\\1-c&c&0
	\end{pmatrix}.
\end{equation}
Both row sums are $(0,c,1)^T$. The two implicit solves use the same operator $I + \gamma\tau a_0 K_h$.

\begin{theorem}[Energy stability of EIN2-LWW]\label{thm:betterein2}
	For \eqref{eq:betterein2tableau}, if $0\leq a\leq a_{\mathrm{max}}$ on the two adjacent pointwise stage segments for a.e. $x\in\Omega$ and $a_0\geq \frac{a_{\mathrm{max}}}{2}$, then
	\begin{equation}\label{eq:betterein2energy}
		E_h(u_h^{n+1})+\tau\sum_{i=1}^2\mathcal D_h(z_h^{(i)})
		+(a_0-\frac{a_{\mathrm{max}}}{2})\sum_{i=1}^2\|d_h^{(i)}\|^2\leq E_h(u_h^n).
	\end{equation}
\end{theorem}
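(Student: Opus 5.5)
The plan is to apply \cref{thm:stagecriterion} directly, as in the proofs of \cref{thm:highenergy} and \cref{thm:ein2potential}. The segment hypothesis of the statement is exactly \eqref{assumption:stabilization-bounds} with $s=2$. So everything reduces to showing that the tableau \eqref{eq:betterein2tableau} with $c=\frac{1}{\sqrt2}$ and $\gamma=\frac{1+\sqrt2}{2}$ satisfies \eqref{eq:HQcondition} with $\eta=q=1$. Then the threshold $a_0\geq\frac{a_{\mathrm{max}}}{2q}$ becomes $a_0\geq\frac{a_{\mathrm{max}}}{2}$, and \eqref{eq:generalenergy} becomes \eqref{eq:betterein2energy} verbatim.

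First I extract the stage matrices from the row-zero convention:
\begin{equation*}
	B=\begin{pmatrix}\gamma&0\\-1&\gamma\end{pmatrix},\quad
	C=\begin{pmatrix}c&0\\1-c&c\end{pmatrix},\quad
	T=\begin{pmatrix}1&0\\1&1\end{pmatrix}.
\end{equation*}
Here $C$ is invertible because $c\neq0$, and the equal row sums $(0,c,1)^T$ supply the cancellation used in deriving \eqref{eq:stagebalance}. Next I compute $C^{-1}=\begin{pmatrix}1/c&0\\-(1-c)/c^2&1/c\end{pmatrix}$. Then
\begin{equation*}
	H_0=C^{-1}T=\begin{pmatrix}1/c&0\\(2c-1)/c^2&1/c\end{pmatrix}
	=\begin{pmatrix}\sqrt2&0\\2\sqrt2-2&\sqrt2\end{pmatrix}.
\end{equation*}
Thus $\sym H_0$ has diagonal entries $\sqrt2$ and off-diagonal entries $\sqrt2-1$, with eigenvalues $\sqrt2\pm(\sqrt2-1)$. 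The smaller one is $1$, so $\eta=1$.

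For $Q=(C^{-1}B-I)T+I$, set $g=\gamma/c=1+\frac{1}{\sqrt2}$. The $(2,1)$ entry of $C^{-1}B$ is $m=-\frac{(1-c)\gamma}{c^2}-\frac1c$, which simplifies to $-\frac{3}{\sqrt2}$. Carrying out the multiplication by $T$ and adding $I$ gives
\begin{equation*}
	Q=\begin{pmatrix}g&0\\m+g-1&g\end{pmatrix}=\begin{pmatrix}1+\frac{1}{\sqrt2}&0\\-\sqrt2&1+\frac{1}{\sqrt2}\end{pmatrix}.
\end{equation*}
So $\sym Q$ has diagonal entries $1+\frac1{\sqrt2}$ and off-diagonal entries $-\frac{1}{\sqrt2}$, with eigenvalues $1+\frac{1}{\sqrt2}\pm\frac1{\sqrt2}$. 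The minimum is exactly $1$, so $q=1$.

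The only real obstacle is this arithmetic: simplifying the surds in $m$ and in the $(2,1)$ entry of $H_0$. These are precisely where the particular choices of $c$ and $\gamma$ make both minimal eigenvalues equal to $1$. I will double-check them by confirming $\det(\sym H_0 - I)=0$ and $\det(\sym Q-I)=0$. With $\eta=q=1$ established, substitution into \eqref{eq:generalenergy} completes the proof.
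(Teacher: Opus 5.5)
Your proposal is correct and matches the paper's proof: you compute the same $H_0=\begin{pmatrix}\sqrt2&0\\2\sqrt2-2&\sqrt2\end{pmatrix}$ and $Q=\begin{pmatrix}1+c&0\\-\sqrt2&1+c\end{pmatrix}$ (your $g=\gamma/c$ equals $1+c$), find $\lambda_{\min}(\sym H_0)=\lambda_{\min}(\sym Q)=1$, and invoke \cref{thm:stagecriterion} with $\eta=q=1$. Your explicit intermediate surd simplifications (e.g.\ $m=-\frac{3}{\sqrt2}$) check out, so the argument is complete as written.
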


\begin{proof}
	Direct calculation yields
	\begin{equation}\label{eq:betterein2HQ}
		H_0=\begin{pmatrix}\sqrt2&0\\2\sqrt2-2&\sqrt2\end{pmatrix},
		\quad Q=\begin{pmatrix}1+c&0\\-\sqrt2&1+c\end{pmatrix}.
	\end{equation}
	The eigenvalues of $\sym H_0$ are $1,2\sqrt2-1$, and those of $\sym Q$ are $1,1+\sqrt2$. Therefore $\eta=q=1$. \Cref{thm:stagecriterion} now gives \eqref{eq:betterein2energy}.
\end{proof}

\begin{remark}
	Estimate~\eqref{eq:betterein2energy} is a one-step counterpart of \eqref{eq:single-step-stability}. Its sum over time is \eqref{eq:generalenergysum} with $\eta=q=1$, corresponding to \eqref{eq:space-time-stability}.
\end{remark}

\subsection{The EIN3-FTY method}

The four-stage third-order IMEX-RK scheme of \cite[Eqn.~(5.12)--(5.14)]{10.1090/mcom/3950} admits the following exact rational realization of its displayed coefficients by our calculation, 
\begin{subequations}\label{eq:fty}
	\begin{equation}
		A^I_{\rm FTY}=\begin{pmatrix}
			0&0&0&0&0\\
			0&\frac{3}{5}&0&0&0\\
			0&\frac{15}{32}&\frac{33}{32}&0&0\\
			0&\frac{2}{5}&-\frac{357}{640}&\frac{709}{640}&0\\
			0&\frac{2825}{756}&-\frac{232}{297}&-\frac{6400}{231}&\frac{103}{4}
		\end{pmatrix},
	\end{equation}
	\begin{equation}
		A^E_{\rm FTY}=\begin{pmatrix}
			0&0&0&0&0\\
			\frac{3}{5}&0&0&0&0\\
			\frac{51}{64}&\frac{45}{64}&0&0&0\\
			\frac{2}{5}&\frac{4841}{11520}&\frac{299}{2304}&0&0\\
			\frac{103}{342}&\frac{125}{378}&-\frac{26}{297}&\frac{2000}{4389}&0
		\end{pmatrix}.
	\end{equation}
\end{subequations}
The shared abscissae are $c=(0,\frac{3}{5},\frac{3}{2},\frac{19}{20},1)^T$. Abscissae larger than one cause no restriction for this autonomous equation. 

\begin{theorem}[Energy stability of EIN3-FTY]\label{thm:ein3fty}
	Apply \eqref{eq:generalimex} with \eqref{eq:fty}. If $0\leq a\leq a_{\mathrm{max}}$ on all four adjacent pointwise stage segments for a.e. $x\in\Omega$ and $a_0 \geq \frac{a_{\mathrm{max}}}{2}$, then
	\begin{equation}\label{eq:ftyenergy}
		E_h(u_h^{n+1}) + \frac{\tau}{16} \sum_{i=1}^4 \mathcal D_h(z_h^{(i)}) + (a_0 - \frac{a_{\mathrm{max}}}{2}) \sum_{i=1}^4 \|d_h^{(i)}\|^2 
		\leq E_h(u_h^n).
	\end{equation}
\end{theorem}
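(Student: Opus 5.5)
The plan is to reduce \cref{thm:ein3fty} to \cref{thm:stagecriterion}, exactly as in the proofs of \cref{thm:highenergy,thm:ein2potential,thm:betterein2}. It suffices to show that the coefficient conditions \eqref{eq:HQcondition} hold for \eqref{eq:fty} with $\eta=\frac{1}{16}$ and $q=1$. Indeed, with $q=1$, the threshold $a_0\geq\frac{a_{\mathrm{max}}}{2q}$ becomes $a_0\geq\frac{a_{\mathrm{max}}}{2}$, the increment coefficient becomes $a_0-\frac{a_{\mathrm{max}}}{2}$, and the dissipation coefficient becomes $\frac{\tau}{16}$. Also, the segment hypothesis of \cref{thm:ein3fty} is precisely \eqref{assumption:stabilization-bounds} with $s=4$. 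Substitution into \eqref{eq:generalenergy} then gives \eqref{eq:ftyenergy}.

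First I would check \cref{assume:IMEX-DIRK} and extract the $4\times4$ arrays.
\begin{itemize}
\item $B=(a^I_{ij})_{i,j=1}^4$ is lower triangular with diagonal $\frac35,\frac{33}{32},\frac{709}{640},\frac{103}{4}$.
\item $C=(a^E_{i,j-1})_{i,j=1}^4$ is lower triangular with diagonal $\frac35,\frac{45}{64},\frac{299}{2304},\frac{2000}{4389}$. This diagonal consists of the nonzero subdiagonal of $A^E$, so $C$ is invertible.
\item The row sums of $A^I$ and $A^E$ both equal the abscissae $c$, which gives the identity $\bm a_0^I+B\bm e=C\bm e$ used to derive \eqref{eq:stagebalance}.
\end{itemize}
Next I would compute $C^{-1}$ by forward substitution in exact rational arithmetic. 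Then I would form $H_0=C^{-1}T$, i.e.\ the reverse cumulative row sums of $C^{-1}$, and $Q=(C^{-1}B-I)T+I$. Both products are lower triangular, since $C^{-1}$, $B$ and $T$ are. In particular, the diagonals are available in closed form: $(H_0)_{ii}=1/C_{ii}$ and $Q_{ii}=B_{ii}/C_{ii}$. For example, $Q_{11}=1$, $Q_{22}=\frac{33}{32}\cdot\frac{64}{45}=\frac{22}{15}$, and $(H_0)_{44}=\frac{4389}{2000}$. These values already show that $q=1$ cannot be improved, since $Q_{11}=1$.

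The core verification is positive semidefiniteness of the two symmetric $4\times4$ rational matrices $\sym H_0-\frac{1}{16}I$ and $\sym Q-I$. I would do this with an exact $LDL^T$ (symmetric Gaussian elimination) factorization, checking that every pivot is nonnegative. This is more robust than Sylvester's criterion on leading minors, because the matrices may be singular. In particular, $\sym Q-I$ has a zero $(1,1)$ entry. Positive semidefiniteness then forces the whole first row of $\sym Q-I$ to vanish, so I would first confirm that $Q_{21}=Q_{31}=Q_{41}=0$ for this tableau. I expect this to be part of the design of \cite{10.1090/mcom/3950}. I would then eliminate on the trailing $3\times3$ block. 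If it instead turns out that $Q_{j1}\neq0$, the bound $q=1$ is false, and I would need to recheck the rational realization \eqref{eq:fty}.

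The main obstacle is exactly this step: the constants $\eta=\frac1{16}$ and $q=1$ are presumably near-sharp. The rational entries of $C^{-1}$ involve denominators such as $299$, $4389$ and $297$, so the pivots will be unwieldy fractions, and a floating-point eigenvalue check alone would not be a proof. To keep the argument verifiable, I would record $H_0$ and $Q$ explicitly, as in \eqref{eq:betterein2HQ}, and list the exact $LDL^T$ pivots. Alternatively, I would report the smallest eigenvalues of $\sym H_0$ and $\sym Q$, namely $1$ for $\sym Q$ and a value no smaller than $\frac1{16}$ for $\sym H_0$, together with a rational certificate that the characteristic polynomial of the shifted matrix has no negative root, via Descartes' sign rule on alternating coefficients. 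Once \eqref{eq:HQcondition} is certified with $\eta=\frac1{16}$ and $q=1$, \cref{thm:stagecriterion} completes the proof.
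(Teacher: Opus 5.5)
Your proposal is correct and follows the paper's proof. You compute $H_0$ and $Q$ exactly, certify $\sym H_0-\frac1{16}I\succ0$, note that the first row and column of $\sym Q-I$ vanish, certify the trailing $3\times3$ block, and invoke \cref{thm:stagecriterion} with $\eta=\frac1{16}$ and $q=1$; your $LDL^T$ pivots are equivalent to the paper's Sylvester minors. The vanishing of $Q_{21},Q_{31},Q_{41}$ needs no separate check: since $\bm a_0^I=0$ for this tableau, the first column of $(C^{-1}B-I)T$ is $C^{-1}B\bm e-\bm e=-C^{-1}\bm a_0^I=0$.
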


\begin{proof}
	Direct substitution into \eqref{eq:stageHQ} gives
	\begin{equation*}
		H_0=\begin{pmatrix}
			\frac{5}{3} & 0 & 0 & 0\\
			-\frac{7}{15} & \frac{64}{45} & 0 & 0\\
			\frac{91487}{22425} & \frac{208576}{67275} & \frac{2304}{299} & 0\\
			\frac{11464607}{5175000} & \frac{54586297}{31050000} & \frac{169043}{46000} & \frac{4389}{2000}
		\end{pmatrix},
	\end{equation*}
	\begin{equation*}
		Q=\begin{pmatrix}
			1 & 0 & 0 & 0\\
			0 & \frac{22}{15} & 0 & 0\\
			0 & -\frac{33887}{22425} & \frac{12762}{1495} & 0\\
			0 & -\frac{338171431}{41400000} & -\frac{3359543}{920000} & \frac{452067}{8000}
		\end{pmatrix},
	\end{equation*}
  with smallest eigenvalues of the symmetric parts being
  \begin{equation*}
    \begin{aligned}
      & \lambda_{\min}(\sym H_0) \approx 0.0872304396420106, \\
      & \lambda_{\min}(\sym Q) = 1.
    \end{aligned}
  \end{equation*}
	For the bounds used here, Sylvester's criterion gives $\sym H_0-\frac1{16}I\succ0$. The first row and column of $\sym Q-I$ vanish, and its lower-right $3\times3$ block is positive definite by the same criterion. Thus \eqref{eq:HQcondition} holds with $\eta=\frac{1}{16}$ and $q=1$, and \cref{thm:stagecriterion} proves \eqref{eq:ftyenergy}.
\end{proof}

\begin{remark}
	The implicit diagonal factors are $\frac{3}{5},\frac{33}{32},\frac{709}{640},\frac{103}{4}$. Thus this small $a_0$ bound comes with four distinct implicit operators and a large last diagonal factor. At $a_0=\frac{a_{\mathrm{max}}}{2}$, the last operator has coefficient $\frac{103}{8}a_{\mathrm{max}}$ in front of $\tau K_h$. This is relevant when comparing stabilization strength, although it does not alter third-order consistency.
\end{remark}

\subsection{The EIN3-LWW method}

Use IERK3-2 of \cite[Section~4.2]{10.4208/csiam-am.SO-2025-0095}, with parameter $a^I_{43}=-\frac{3}{5}$ in their one-based stage convention. Including the old value as row zero, its exact arrays are
\begin{subequations}\label{eq:lww}
	\begin{equation}
		A^I_{\rm LWW}=\begin{pmatrix}
			0&0&0&0&0\\
			\frac{2}{25}&\frac{18}{25}&0&0&0\\
			\frac{3}{8}&\frac{61}{200}&\frac{18}{25}&0&0\\
			\frac{1607}{12600}&\frac{12001}{12600}&-\frac{3}{5}&\frac{18}{25}&0\\
			\frac{1030769}{2877000}&\frac{276523}{1233000}&-\frac{196127}{1078875}&-\frac{2068}{17125}&\frac{18}{25}
		\end{pmatrix},
	\end{equation}
	\begin{equation}
		A^E_{\rm LWW}=\begin{pmatrix}
			0&0&0&0&0\\
			\frac{4}{5}&0&0&0&0\\
			\frac{3}{5}&\frac{4}{5}&0&0&0\\
			\frac{10111}{10080}&-\frac{6079}{10080}&\frac{4}{5}&0&0\\
			\frac{313}{840}&\frac{131}{360}&-\frac{169}{315}&\frac{4}{5}&0
		\end{pmatrix}.
	\end{equation}
\end{subequations}
Here $c=(0,\frac{4}{5},\frac{7}{5},\frac{6}{5},1)^T$. Unlike the FTY choice, all four solves reuse the single operator $I + (\frac{18}{25}) \tau a_0 K_h$ and may have computational benefits.

\begin{theorem}[Energy stability of EIN3-LWW]\label{thm:ein3lww}
	For \eqref{eq:lww}, assume $0\leq a\leq a_{\mathrm{max}}$ on the four adjacent pointwise state segments for a.e. $x\in\Omega$. If $a_0 \geq a_{\mathrm{max}}$, then
	\begin{equation}\label{eq:lwwenergy}
		E_h(u_h^{n+1}) + \frac{\tau}{8} \sum_{i=1}^4 \mathcal D_h(z_h^{(i)}) + \frac{a_0-a_{\mathrm{max}}}{2} \sum_{i=1}^4 \|d_h^{(i)}\|^2 
		\leq E_h(u_h^n).
	\end{equation}
\end{theorem}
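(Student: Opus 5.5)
We apply \cref{thm:stagecriterion} directly, exactly as for the other schemes. The target estimate \eqref{eq:lwwenergy} is that theorem with $\eta=\frac18$ and $q=\frac12$. Indeed, $a_0q-\frac{a_{\mathrm{max}}}{2}=\frac{a_0-a_{\mathrm{max}}}{2}$, and the threshold $a_0\geq \frac{a_{\mathrm{max}}}{2q}=a_{\mathrm{max}}$ matches the hypothesis. The proof therefore reduces to computing $H_0$ and $Q$ from \eqref{eq:lww} and verifying
\begin{equation*}
	\sym H_0\succeq \tfrac18 I,\qquad \sym Q\succeq \tfrac12 I.
\end{equation*}

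First, I would read off $B=(a^I_{ij})_{i,j=1}^4$, which is lower triangular with diagonal $\frac{18}{25}$. I would also read off $C=(a^E_{i,j-1})_{i,j=1}^4$, which is lower triangular with diagonal $\frac45$; it is invertible, consistent with \cref{assume:IMEX-DIRK}. As a consistency check, I would confirm the equal row sums $\bm a_0^I+B\bm e=C\bm e=(\frac45,\frac75,\frac65,1)^T$.

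Then I would form $H_0=C^{-1}T$ and $Q=(C^{-1}B-I)T+I$ in exact rational arithmetic. Both are lower triangular. Since $C^{-1}B$ has diagonal $\frac{18/25}{4/5}=\frac{9}{10}$ and $T$ is unit lower triangular, the diagonal of $H_0$ is $\frac54$ and the diagonal of $Q$ is $\frac{9}{10}$. These give the obvious upper bounds for the diagonals of the symmetric parts. The off-diagonal entries then control how far the smallest eigenvalues drop below these values.

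Next, I would form $\sym H_0-\frac18 I$ and $\sym Q-\frac12 I$, which are $4\times4$ symmetric rational matrices. I would certify their positive semidefiniteness by Sylvester's criterion, that is, by showing all leading principal minors are positive, as in the proof of \cref{thm:ein3fty}. An alternative is an exact $LDL^T$ factorization with nonnegative pivots. If one of the bounds turns out to be attained, for instance if $\lambda_{\min}(\sym Q)=\frac12$ exactly, the matrix is only semidefinite. In that case I would certify it instead by exhibiting the null vector and checking positivity of the complementary block, as was done for the vanishing first row of $\sym Q-I$ in EIN3-FTY.

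The main obstacle is this last step. The entries of $C^{-1}$ involve the awkward denominators $10080$, $315$ and $360$. The off-diagonal entries of $Q$ in particular may be sizable, because $C^{-1}B$ is combined with $T$. A naive Gershgorin bound will very likely fail to give $q=\frac12$, so the exact minors must be computed, preferably with symbolic verification. I would first compute $\lambda_{\min}$ numerically to confirm that $\frac18$ and $\frac12$ lie below the true smallest eigenvalues with some margin. Only then would I carry out the exact minor computations, and once both matrix bounds hold, \cref{thm:stagecriterion} yields \eqref{eq:lwwenergy}.
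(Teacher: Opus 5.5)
Your proposal is correct and matches the paper's proof. The paper also invokes \cref{thm:stagecriterion} with $\eta=\frac18$ and $q=\frac12$, computes $H_0=C^{-1}T$ and $Q=(C^{-1}B-I)T+I$ exactly (their diagonals are $\frac54$ and $\frac{9}{10}$, as you predicted), and certifies $\sym H_0-\frac18 I\succ0$ and $\sym Q-\frac12 I\succ0$ by Sylvester's criterion. Your semidefinite fallback is not needed: $\lambda_{\min}(\sym H_0)\approx0.1364$ and $\lambda_{\min}(\sym Q)\approx0.5021$ exceed the targets strictly, although the margin for $Q$ is thin enough that exact rational minors are the right tool.
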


\begin{proof}
	Direct substitution into \eqref{eq:stageHQ} gives
	\begin{equation*}
		H_0=\begin{pmatrix}
			\frac{5}{4} & 0 & 0 & 0\\
			\frac{5}{16} & \frac{5}{4} & 0 & 0\\
			-\frac{3515}{43008} & \frac{70715}{32256} & \frac{5}{4} & 0\\
			\frac{5102845}{10838016} & \frac{17489795}{8128512} & \frac{2105}{1008} & \frac{5}{4}
		\end{pmatrix},
	\end{equation*}
	\begin{equation*}
		Q=\begin{pmatrix}
			\frac{9}{10} & 0 & 0 & 0\\
			-\frac{63}{160} & \frac{9}{10} & 0 & 0\\
			-\frac{47433}{143360} & -\frac{1537}{8960} & \frac{9}{10} & 0\\
			-\frac{3663025711}{8248934400} & -\frac{513442597}{1546675200} & \frac{67633}{191800} & \frac{9}{10}
		\end{pmatrix},
	\end{equation*}
  with smallest eigenvalues of the the symmetric parts being 
  \begin{equation*}
    \begin{aligned}
      & \lambda_{\min}(\sym H_0) \approx 0.136354899669695, \\
      & \lambda_{\min}(\sym Q) \approx 0.502118851290468.
    \end{aligned}
  \end{equation*}
	Sylvester's criterion gives $\sym H_0-\frac18I\succ0$ and $\sym Q-\frac12I\succ0$. Hence \eqref{eq:HQcondition} holds with $\eta=\frac{1}{8}$ and $q=\frac{1}{2}$, and \cref{thm:stagecriterion} proves \eqref{eq:lwwenergy}.
\end{proof}

\subsection{The EIN3-ARS(4,4,3) method: Failure in the framework}

The four-stage third-order EIN formula used in \cite[Eq.~(5.1)]{10.1007/s11425-018-9524-x} is the ARS(4,4,3) formula of \cite{10.1016/S0168-92749700056-1}. It differs from the two third-order formulae considered above and does not satisfy the assumptions of \cref{thm:stagecriterion}.

Indeed, including the old value as stage zero, its last diagonal implicit coefficient and last subdiagonal explicit coefficient are
\begin{equation*}
	a^I_{44} = \frac12, \quad a^E_{43} = -\frac74.
\end{equation*}
The lower triangular matrices
\begin{equation*}
	B=(a^I_{ij})_{i,j=1}^4,\quad
	C=(a^E_{i,j-1})_{i,j=1}^4
\end{equation*}
therefore satisfy
\begin{equation*}
	(H_0)_{44}=\frac{1}{C_{44}}=-\frac47,\quad
	Q_{44}=\frac{B_{44}}{C_{44}}=-\frac27,
\end{equation*}
where
\begin{equation*}
	H_0=C^{-1}T,\quad Q=(C^{-1}B-I)T+I.
\end{equation*}
A positive semidefinite symmetric matrix must have nonnegative diagonal entries. Consequently, neither $\sym H_0\succeq0$ nor $\sym Q\succeq qI$ with $q>0$ holds for this formula. These coefficient matrices are independent of $a_0$, so increasing the stabilization parameter does not make the formula satisfy the present criterion.

This observation concerns the sufficient conditions of \cref{thm:stagecriterion}. It does not imply that the EIN-ARS(4,4,3) formula is energy unstable.

\section{Generalizations}\label{sec:generalizations}

\subsection{Diffusion systems with a scalar energy potential}

\subsubsection{A fixed-metric potential structure}

Let $\mathbf{u}: \Omega \mapsto \mathcal U\subset\mathbb R^m$, where $\mathcal U$ is open and convex, and consider
\begin{equation}
	\mathbf{u}_t=\sum_{\alpha=1}^d\partial_{x_\alpha} \left( A(\mathbf{u})\partial_{x_\alpha} \mathbf{u}\right).
\end{equation}
We impose homogeneous Dirichlet boundary conditions as in \cref{sec:space}. Suppose a convex scalar energy potential $\Psi\in C^2(\mathcal U)$ and a fixed symmetric positive definite matrix $H\in\mathbb R^{m\times m}$ satisfy
\begin{equation}\label{eq:systemhessian}
	H A(\mathbf{u}) = \nabla^2\Psi(\mathbf{u}),\quad
	0 \preceq H^{-\frac{1}{2}} \nabla^2 \Psi(\mathbf{u}) H^{-\frac{1}{2}} \preceq A_{\mathrm{max}} I_m.
\end{equation}
Here $A_{\mathrm{max}}$ is an eigenvalue bound. With a reference state $\mathbf{u}_*$, set
\begin{equation}\label{eq:systemPhi}
	\Phi(\mathbf{u}) = H^{-1}\{\nabla\Psi(\mathbf{u}) - \nabla\Psi(\mathbf{u}_*)\},\quad
	D\Phi(\mathbf{u}) = A(\mathbf{u}).
\end{equation}
The Kirchhoff transform $\Phi$ is now vector-valued, whereas the energy potential $\Psi$ remains a scalar. Their relation is $H \Phi(\mathbf{u}) = \nabla\Psi(\mathbf{u}) - \nabla\Psi(\mathbf{u}_*)$; in the Euclidean metric $H = I_m$, this is the vector counterpart of $\psi' = \phi$. In the proof below we take $\mathbf{u}_*=0$, so that both $\mathbf{u}$ and $\Phi(\mathbf{u})$ have homogeneous Dirichlet data. For a constant nonzero boundary state $\mathbf{u}_*$, apply the proof to $\mathbf{v} = \mathbf{u} - \mathbf{u}_*$, $\widetilde\Psi(\mathbf{v}) = \Psi(\mathbf{u}_* + \mathbf{v}) - \Psi(\mathbf{u}_*) - \nabla\Psi(\mathbf{u}_*)\cdot \mathbf{v}$, and $\widetilde\Phi(\mathbf{v}) = \Phi(\mathbf{u}_* + \mathbf{v})$. The differential identity $D \Phi = A$ gives $\mathbf{u}_t = \Delta\Phi(\mathbf{u})$, and the normalized continuous energy satisfies
\begin{equation*}
	\frac{\d}{\d t} \int_\Omega \{\Psi(\mathbf{u}) - \Psi(\mathbf{u}_*) - \nabla\Psi(\mathbf{u}_*) \cdot (\mathbf{u} - \mathbf{u}_*)\} \, \d \mathbf{x}
	=-\sum_{\alpha=1}^d \int_\Omega (\partial_{x_\alpha}\Phi)^T H \partial_{x_\alpha}\Phi\,\d \mathbf{x}
\end{equation*}
under the boundary conditions of \cref{sec:space}. Thus $H^{-1}$ is a constant Onsager mobility and $A = H^{-1} \nabla^2\Psi$. The Euclidean Hessian case is $H = I_m$; other fixed metrics allow nonsymmetric diffusion matrices.

For a $C^1$ matrix field, let $S(\mathbf{u}) = H A(\mathbf{u})$. On a convex state domain the compatibility conditions for a scalar energy potential are
\begin{equation}\label{eq:hessiancompatibility}
	S_{ij} = S_{ji},\quad
	\partial_{u_k} S_{ij}=\partial_{u_j} S_{ik} \quad i,j,k=1,\ldots,m.
\end{equation}
The differential condition integrates each row to a component of a vector field, and symmetry makes that vector field a gradient. Nonnegative definiteness makes its potential convex. A fixed symmetrizer alone does not imply the differential condition. The structure in \eqref{eq:systemhessian} is sufficient for the present proof; it is not asserted to be necessary for every energy estimate for a diffusion system.

\subsubsection{Projected vector Kirchhoff transform and energy estimate}

Use $(V_h)^m$ with the exact inner product
\begin{equation*}
	(\mathbf{v}, \mathbf{w})_H = \int_\Omega \mathbf{v}^T H \mathbf{w}\,\d \mathbf{x},\quad\|\mathbf{v}\|_H^2=(\mathbf{v}, \mathbf{v})_H.
\end{equation*}
The componentwise orthogonal projection $\PV$ is also orthogonal in this inner product because $H$ is fixed. Let $K_h$ act identically on all components. It is self-adjoint and positive semidefinite in $(\cdot,\cdot)_H$. Define
\begin{equation*}
	\mathcal D_{h,H} (\mathbf{v}_h) \coloneqq (K_h \mathbf{v}_h, \mathbf{v}_h)_H \geq 0.
\end{equation*}
The normalized energy or free energy is
\begin{equation}\label{eq:systemenergy}
	E(\mathbf{v}) \coloneqq \int_\Omega \{\Psi(\mathbf{v}) - \Psi(\mathbf{u}_*) - \nabla \Psi(\mathbf{u}_*) \cdot (\mathbf{v} - \mathbf{u}_*)\}\,\d \mathbf{x} \geq 0.
\end{equation}
Write $E_h:=E|_{(V_h)^m}$. Its Fr\'echet derivative satisfies
\begin{equation*}
	\langle D E_h(\mathbf{v}_h), \mathbf{w}_h\rangle_{((V_h)^m)',(V_h)^m} = (\Phi(\mathbf{v}_h), \mathbf{w}_h)_H = (\PV\Phi(\mathbf{v}_h), \mathbf{w}_h)_H.
\end{equation*}
Use $\Phi_h^n \coloneqq \PV\Phi(\mathbf{u}_h^n)$ and $\Phi_h^{(j)} \coloneqq \PV\Phi(\mathbf{u}_h^{(j)})$ in the componentwise stage equations.
The corresponding stage vectors are
\begin{equation*}
	\begin{aligned}
		[\mathbf{u}_h^\ast]&=(\mathbf{u}_h^{(1)},\ldots,\mathbf{u}_h^{(s)})^T,\\
		[\mathbf{u}_h]&=(\mathbf{u}_h^{(0)},\ldots,\mathbf{u}_h^{(s-1)})^T,\\
		[\Phi_h]&=(\Phi_h^{(0)},\ldots,\Phi_h^{(s-1)})^T.
	\end{aligned}
\end{equation*}
With $[\mathbf{d}_h]=(\mathbf{d}_h^{(1)},\ldots,\mathbf{d}_h^{(s)})^T$ and $\mathbf{d}_h^{(i)}=\mathbf{u}_h^{(i)}-\mathbf{u}_h^{(i-1)}$, the identities
\begin{equation*}
    [\mathbf{u}_h^\ast]=\bm e\mathbf{u}_h^n+T[\mathbf{d}_h],\quad
    [\mathbf{u}_h]=\bm e\mathbf{u}_h^n+(T-I)[\mathbf{d}_h]
\end{equation*}
give the same elimination as in the scalar case. Define the vector stage potentials by
\begin{equation}\label{eq:vectorstagepotentials}
    [\mathbf{z}_h] \coloneqq H_0^{-1}\left([\Phi_h]+a_0Q[\mathbf{d}_h]\right), \quad
    [\mathbf{z}_h] = (\mathbf{z}_h^{(1)},\ldots,\mathbf{z}_h^{(s)})^T.
\end{equation}
Here $H_0^{-1}$ and $Q$ act only on the temporal stage index, with the same action on every physical component.

\begin{theorem}[Fixed-metric systems]\label{thm:systems}
	Assume \eqref{eq:systemhessian} on the pointwise state segments required by \cref{thm:stagecriterion}. Suppose that the temporal arrays satisfy \eqref{eq:HQcondition}, and retain the scalar stabilizer $a_0 I_m$. If $a_0 \geq \frac{A_{\mathrm{max}}}{2q}$, then
	\begin{equation}\label{eq:systemstageenergy}
		E_h(\mathbf{u}_h^{n+1}) + \tau \eta \sum_i \mathcal D_{h,H}(\mathbf{z}_h^{(i)})
		+(a_0 q - \frac{A_{\mathrm{max}}}{2}) \sum_i \|\mathbf{d}_h^{(i)}\|_H^2
		\leq E_h(\mathbf{u}_h^n),
	\end{equation}
	where the vector stage potentials are defined by \eqref{eq:vectorstagepotentials}. The sufficient bounds are $\frac{A_{\mathrm{max}}}{2}$ for EIN1, $\frac{2 A_{\mathrm{max}}}{3-\sqrt{2}}$ for the standard EIN2 estimate, $\frac{A_{\mathrm{max}}}{2}$ for EIN2-LWW and EIN3-FTY, and $A_{\mathrm{max}}$ for EIN3-LWW. 
\end{theorem}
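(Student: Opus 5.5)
The proof transfers the argument of \cref{thm:stagecriterion} to the weighted inner product $(\cdot,\cdot)_H$ on $(V_h)^m$. Only three scalar ingredients were used there: the gradient identity \eqref{eq:energygradient}, the Taylor bound \eqref{eq:taylorbound}, and self-adjointness and positive semidefiniteness of $K_h$. Once these have vector counterparts in $(\cdot,\cdot)_H$, the temporal-matrix step goes through verbatim.

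\textbf{Step 1: Taylor bound in the $H$-metric.} For $\mathbf{v}_h,\mathbf{d}_h\in(V_h)^m$ whose pointwise segments lie in $\mathcal U$, the integrated Taylor formula for the scalar $\Psi$ gives
\begin{equation*}
E_h(\mathbf{v}_h+\mathbf{d}_h)-E_h(\mathbf{v}_h)=\int_\Omega \nabla\Psi(\mathbf{v}_h)\cdot\mathbf{d}_h\,\d\mathbf{x}+\int_\Omega\int_0^1(1-t)\,\mathbf{d}_h^T\nabla^2\Psi(\mathbf{v}_h+t\mathbf{d}_h)\mathbf{d}_h\,\d t\,\d\mathbf{x}.
\end{equation*}
With $\mathbf{u}_*=0$, the first integrand is $(H\Phi(\mathbf{v}_h))\cdot\mathbf{d}_h$. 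Hence the first term equals $(\Phi(\mathbf{v}_h),\mathbf{d}_h)_H=(\PV\Phi(\mathbf{v}_h),\mathbf{d}_h)_H$. The second equality holds because the componentwise projection is $H$-orthogonal, $H$ being constant.

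For the remainder, write $\mathbf{d}^T\nabla^2\Psi\,\mathbf{d}=(H^{1/2}\mathbf{d})^T\bigl(H^{-1/2}\nabla^2\Psi H^{-1/2}\bigr)(H^{1/2}\mathbf{d})$. Then \eqref{eq:systemhessian} bounds it by $A_{\mathrm{max}}\,\mathbf{d}^TH\mathbf{d}$, and it is nonnegative. This yields
\begin{equation*}
E_h(\mathbf{v}_h+\mathbf{d}_h)-E_h(\mathbf{v}_h)\le(\PV\Phi(\mathbf{v}_h),\mathbf{d}_h)_H+\tfrac{A_{\mathrm{max}}}{2}\|\mathbf{d}_h\|_H^2.
\end{equation*}
The case of a nonzero $\mathbf{u}_*$ reduces to this one by the shift $\mathbf{v}=\mathbf{u}-\mathbf{u}_*$ described before the theorem.

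\textbf{Step 2: stage elimination.} The componentwise stage equations, the row-sum cancellation, and multiplication by $C^{-1}$ act only on the temporal index. They therefore give, exactly as in \eqref{eq:stageelimination},
\begin{equation*}
\mathbf{d}_h^{(i)}=-\tau K_h\mathbf{z}_h^{(i)},\qquad \Phi_h^{(i-1)}=\sum_j(H_0)_{ij}\mathbf{z}_h^{(j)}-a_0\sum_jQ_{ij}\mathbf{d}_h^{(j)}.
\end{equation*}
Apply Step 1 with $\mathbf{v}_h=\mathbf{u}_h^{(i-1)}$ and $\mathbf{d}_h=\mathbf{d}_h^{(i)}$, sum over $i$, and substitute these identities. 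This produces the analogue of \eqref{eq:generalenergyexact}, with Gram matrices $G_{ij}=(\mathbf{z}_h^{(i)},K_h\mathbf{z}_h^{(j)})_H$ and $M_{ij}=(\mathbf{d}_h^{(i)},\mathbf{d}_h^{(j)})_H$.

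\textbf{Step 3: the main check, properties of $K_h$ in $(\cdot,\cdot)_H$.} Since $K_h$ acts identically on every component, $(K_h\mathbf{v},\mathbf{w})_H=\sum_{k,l}H_{kl}(K_hv_l,w_k)$. Self-adjointness in $(\cdot,\cdot)_H$ then follows from the scalar self-adjointness of $K_h$ together with the symmetry of $H$.

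Positive semidefiniteness needs one more step. Factor $H=P^T\Lambda P$ with $P$ orthogonal and $\Lambda$ diagonal with positive entries, and set $\tilde{\mathbf{v}}=P\mathbf{v}$. Because $K_h$ commutes with constant matrices acting on the component index, $(K_h\mathbf{v},\mathbf{v})_H=\sum_k\Lambda_{kk}(K_h\tilde v_k,\tilde v_k)\ge0$. Consequently $G$ and $M$ are symmetric positive semidefinite, since $\bm\xi^TG\bm\xi=\mathcal D_{h,H}(\sum_i\xi_i\mathbf{z}_h^{(i)})\ge 0$.

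\textbf{Step 4: conclusion.} The matrix inequalities $\sum_{i,j}(H_0)_{ij}G_{ij}\ge\eta\sum_i\mathcal D_{h,H}(\mathbf{z}_h^{(i)})$ and $\sum_{i,j}Q_{ij}M_{ij}\ge q\sum_i\|\mathbf{d}_h^{(i)}\|_H^2$ follow as in \cref{thm:stagecriterion}, and together give \eqref{eq:systemstageenergy}.

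The scheme-specific thresholds need no new work. The pairs $(\eta,q)$ depend only on the temporal arrays and were computed in \cref{thm:highenergy,thm:ein2potential,thm:betterein2,thm:ein3fty,thm:ein3lww}: $q=1$ for EIN1, EIN2-LWW and EIN3-FTY, $q=\frac{3-\sqrt2}{4}$ for the standard EIN2, and $q=\frac12$ for EIN3-LWW. The bound $a_0\ge\frac{A_{\mathrm{max}}}{2q}$ then gives the listed values $\frac{A_{\mathrm{max}}}{2}$, $\frac{2A_{\mathrm{max}}}{3-\sqrt2}$, $\frac{A_{\mathrm{max}}}{2}$, $\frac{A_{\mathrm{max}}}{2}$ and $A_{\mathrm{max}}$, respectively.
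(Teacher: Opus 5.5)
Your proposal is correct and follows the paper's proof essentially step by step: the $H$-weighted Taylor bound with $H$-orthogonality of $\PV$, componentwise stage elimination, positive semidefiniteness of the Gram matrices, and the $\sym H_0$, $\sym Q$ bounds. It also verifies the $H$-self-adjointness and positive semidefiniteness of $K_h$, which the paper only asserts.

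One bookkeeping slip: your displayed Taylor identity for $E_h$ omits the normalization term $-\nabla\Psi(\mathbf{u}_*)\cdot\mathbf{d}_h$ that comes from \eqref{eq:systemenergy}. It is exactly this term that turns $\nabla\Psi(\mathbf{v}_h)\cdot\mathbf{d}_h$ into $(H\Phi(\mathbf{v}_h))\cdot\mathbf{d}_h$, as in the paper's pointwise inequality; as written, your claim would need $\nabla\Psi(0)=0$.
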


\begin{proof}
	Apply pointwise Taylor expansion to the energy function in \eqref{eq:systemenergy}. The Hessian bound gives
	\begin{equation*}
		\Psi(\mathbf{u}+\mathbf{d}) - \Psi(\mathbf{u}) - \nabla\Psi(\mathbf{u}_*) \cdot \mathbf{d}
		\leq \Phi(\mathbf{u})^T H \mathbf{d} + \frac{A_{\mathrm{max}}}{2}\mathbf{d}^T H \mathbf{d}.
	\end{equation*}
	Integration and orthogonality of $\PV$ yield
	\begin{equation*}
		E_h(\mathbf{u}_h^{(i-1)} + \mathbf{d}_h^{(i)}) - E_h(\mathbf{u}_h^{(i-1)})
		\leq (\Phi_h^{(i-1)}, \mathbf{d}_h^{(i)})_H + \frac{A_{\mathrm{max}}}{2}\|\mathbf{d}_h^{(i)}\|_H^2.
	\end{equation*}
	The stage identities \eqref{eq:stageelimination} hold componentwise. The matrices $(K_h \mathbf{z}_h^{(i)}, \mathbf{z}_h^{(j)})_H$ and $(\mathbf{d}_h^{(i)}, \mathbf{d}_h^{(j)})_H$ are symmetric positive semidefinite. Applying \eqref{eq:HQcondition} as in the scalar proof gives \eqref{eq:systemstageenergy} and its time-summed version. 
\end{proof}

Estimate~\eqref{eq:systemstageenergy} is the fixed-metric counterpart of the one-step estimate \eqref{eq:single-step-stability}. For variable steps $\tau_n > 0$, its sum over time gives
\begin{equation}
	E_h(\mathbf{u}_h^N) + \sum_{n=0}^{N-1} \left(\tau_n\eta\sum_i\mathcal D_{h,H}(\mathbf{z}_h^{n,(i)}) + (a_0 q - \frac{A_{\mathrm{max}}}{2}) \sum_i \|\mathbf{d}_h^{n,(i)}\|_H^2 \right)
	\leq E_h(\mathbf{u}_h^0),
\end{equation}
which has the space-time form of \eqref{eq:space-time-stability} in the $H$ inner product. For EIN1, $\mathbf{z}_h^{(1)} = \mathbf{W}_h^{n+1}$ gives the potential dissipation in the scalar estimate. Every implicit stage solves $I + \tau a_0 a^I_{ii} K_h$ separately on each component. The fixed metric therefore preserves the linear implicit solves of the scalar method.

\subsection{Quadrature and other boundary conditions}\label{rem:quadrature}

The preceding energy estimates also apply when positive-weight quadrature is used for the nonlinear integrals, with a consistent definition of the energy and projected Kirchhoff transform. For example, retain the exact $L^2$ inner product and the Dirichlet operator $K_h$, and let $Q_\Omega$ be a positive-weight volume rule satisfying
\begin{equation*}
	Q_\Omega(v_h \cdot w_h) = (v_h, w_h) \quad v_h, w_h\in V_h.
\end{equation*}
Define the quadrature energy and projected Kirchhoff transform by
\begin{equation*}
	E_{h,Q}(v_h) \coloneqq Q_\Omega(\psi(v_h)), \quad
	(\Pi_{V_h}^{Q}\phi(v_h),w_h)
		=Q_\Omega(\phi(v_h)w_h).
\end{equation*}
Then
\begin{equation*}
\begin{aligned}
	\langle DE_{h,Q}(v_h),w_h\rangle_{V_h',V_h}
		&=(\Pi_{V_h}^{Q}\phi(v_h),w_h),\\
	E_{h,Q}(v_h+d_h)-E_{h,Q}(v_h)
		&\leq(\Pi_{V_h}^{Q}\phi(v_h),d_h)+\frac{a_{\mathrm{max}}}{2}\|d_h\|^2.
\end{aligned}
\end{equation*}
provided $0\leq a\leq a_{\mathrm{max}}$ on the sampled state segments. All preceding proofs therefore hold with $E_{h,Q}$ and $\Pi_{V_h}^{Q}$, with the same constants. No polynomial exactness for the nonlinear energy $\psi(v_h)$ is needed. Exactness for products in $V_h$ suffices for this version.

Alternatively, one may use the quadrature inner product throughout, provided it is positive definite on $V_h$, the projection is orthogonal in that inner product, and the discrete Laplacian is self-adjoint and positive semidefinite in it. Positive weights alone do not ensure these operator properties. Exactness for the polynomial products in the mass and elliptic forms is a sufficient way to preserve them; otherwise they must be verified directly. The quadrature energy, rather than the exact nonlinear integral, is then dissipated. The same statements hold for the fixed-metric systems.

Homogeneous Neumann and periodic boundary conditions are also covered by choosing the corresponding self-adjoint positive semidefinite discrete Laplacian. The stage identities and energy estimates remain valid without additional constraints on the stages.

\section{Applications}\label{sec:applications}

We first identify several classical diffusion models with the required potential. The coefficient bounds below must hold on the pointwise segments specified in the corresponding energy theorem. They do not assert preservation of an admissible state range by the time discretization.

\subsection{Gas flow in a porous medium}

The porous-medium equation arises from Darcy flow of a compressible gas; see the classical derivation of Muskat \cite{10.1063/1.1745233}. With constant porosity $\varphi$, permeability $k$, viscosity $\mu$, and the polytropic law $p=c\rho^\gamma$, mass conservation gives
\begin{equation*}
	\varphi\rho_t+\nabla\cdot(\rho v)=0,\quad
	v=-\frac{k}{\mu}\nabla p,
\end{equation*}
and therefore
\begin{equation}
	\rho_t=\frac{kc\gamma}{\varphi\mu}
		\nabla\cdot(\rho^\gamma\nabla\rho)
		=D\Delta(\rho^m),\quad
	m=\gamma+1>1,\quad D=\frac{kc\gamma}{\varphi\mu m}.
\end{equation}
For $\rho\geq0$ the Kirchhoff transform and energy are
\begin{equation*}
	\phi(\rho)=D\rho^m,\quad
	\psi(\rho)=\frac{D}{m+1}\rho^{m+1},\quad
	a(\rho)=Dm\rho^{m-1}.
\end{equation*}
On segments with $0\leq\rho\leq R$, take $a_{\mathrm{max}}=D m R^{m-1}$. The isothermal ideal-gas case has $\gamma=1$, hence $m=2$.

\subsection{Horizontal unsaturated flow}

In the absence of gravity, the moisture-content form of the Richards model \cite{10.1063/1.1745010} is
\begin{equation}
	\theta_t=\nabla\cdot\{K(\theta)\nabla p(\theta)\}
	=\nabla\cdot\{D(\theta)\nabla\theta\},\quad
	D(\theta)=K(\theta)p'(\theta)\geq0.
\end{equation}
Here $K$ is the hydraulic conductivity and $p$ is the pressure head, increasing with moisture content on the constitutive branch under consideration. Thus
\begin{equation*}
	\phi(\theta)=\int_{\theta_*}^{\theta}D(s)\,ds,\quad
	\psi(\theta)=\int_{\theta_*}^{\theta}\phi(s)\,ds.
\end{equation*}
Take $\theta_*$ as a constant Dirichlet datum and apply the scheme to $\theta-\theta_*$, with the corresponding shifted potential. The energy estimates apply with an upper bound for $D$ on the relevant moisture range. A compact range inside the constitutive interval supplies such a bound when $D$ is continuous. This example concerns horizontal flow, or the diffusion part of the Richards equation; the gravity term in the full model is a separate transport term. EIN treatment of the full Richards model is studied in \cite{10.1016/j.compgeo.2026.108315}.

\subsection{Electron heat conduction in a plasma}

The Spitzer--H\"arm law \cite{10.1103/PhysRev.89.977} gives a conductivity proportional to $T^{\frac{5}{2}}$ for a collisional, unmagnetized, fully ionized plasma, with the Coulomb logarithm treated as constant. At fixed density and heat capacity, the isotropic conduction model becomes
\begin{equation}
	T_t=\chi\nabla\cdot(T^{\frac{5}{2}}\nabla T),\quad \chi>0.
\end{equation}
For nonnegative temperature,
\begin{equation*}
	\phi(T)=\frac{2\chi}{7}T^{\frac{7}{2}},\quad
	\psi(T)=\frac{4\chi}{63}T^{\frac{9}{2}},\quad
	a(T)=\chi T^{\frac{5}{2}}.
\end{equation*}
Thus $a_{\mathrm{max}}=\chi R^{\frac{5}{2}}$ on segments with $0\leq T\leq R$. The convex functional used here is a mathematical energy for the diffusion equation. It need not coincide with the thermodynamic energy of the plasma.

\subsection{Multicomponent diffusion with constant Onsager mobility}

Onsager's reciprocal relations \cite{10.1103/PhysRev.37.405,10.1103/PhysRev.38.2265} provide the constitutive law for coupled irreversible transport. For isothermal diffusion of $m$ solutes and a solvent in a closed mixture, write $u_0=1-\sum_{i=1}^m u_i$, where the components of \(\mathbf{u} \in \R^m\) represent the solute fractions, and use the dimensionless ideal-mixture free-energy density
\begin{equation}
	f(\mathbf{u}) = \sum_{i=1}^m u_i\log u_i+u_0\log u_0,
	\quad u_i > 0,\quad u_0 > 0.
\end{equation}
Its derivatives are the solute--solvent chemical-potential differences $\mu_i=\log(\frac{u_i}{u_0})$; see \cite{DGM1962} for the thermodynamic formulation of multicomponent diffusion. With a constant symmetric positive definite Onsager mobility $M$, the fluxes and conservation laws are
\begin{equation}
	J_i = -\sum_{j=1}^m M_{ij}\nabla\mu_j,\quad
	\mathbf{u}_t = -\nabla\cdot J = M\Delta\nabla f(\mathbf{u}).
\end{equation}
The solvent flux is $J_0 = -\sum_i J_i$. This is the constant-mobility specialization of the Onsager diffusion law. No composition-dependent mobility is assumed. It has
\begin{equation}
	A(\mathbf{u}) = M\left\{\diag(u_1^{-1},\ldots,u_m^{-1}) + u_0^{-1}(1)_{i,j=1}^m\right\}, \quad
	H=M^{-1},\quad \Psi = f.
\end{equation}
The vector Kirchhoff transform is $M\nabla f$, up to the constant normalization in \eqref{eq:systemPhi}. The diffusion matrix is generally nonsymmetric, while $H A=\nabla^2 f$. If every required state segment satisfies $u_i\geq\delta>0$ for $i=0,\ldots,m$, then
\begin{equation*}
	0\prec H^{-\frac{1}{2}}\nabla^2 f(\mathbf{u}) H^{-\frac{1}{2}}
	\preceq \frac{(m+1)\lambda_{\max}(M)}{\delta} I_m.
\end{equation*}
\Cref{thm:systems} therefore applies with the displayed upper bound as $A_{\mathrm{max}}$. Choose a constant interior equilibrium $\mathbf{u}_*$ as boundary data and apply the analysis to $\mathbf{u}-\mathbf{u}_*$, which has homogeneous Dirichlet data. This restriction excludes the singular concentration limits; it is an assumption on the stage range.

\subsection{Regularized vector fast diffusion}

For $0<p<1$ and $\varepsilon>0$, set
\begin{equation}\label{eq:regfast}
	\mathbf{u}_t=\Delta\left((\varepsilon^2+|\mathbf{u}|^2)^{\frac{p-1}{2}}\mathbf{u}\right),\quad
	\Psi_\varepsilon(\mathbf{u})=\frac{(\varepsilon^2+|\mathbf{u}|^2)^{\frac{p+1}{2}}}{p+1},\quad
	\mathbf{u} \in \R^m. 
\end{equation}
This is a smooth regularization of vector fast diffusion, specified here to give a globally valid example. With $\rho=|\mathbf{u}|$ and $s=\varepsilon^2+\rho^2$, its Hessian is
\begin{equation*}
	 A(\mathbf{u})=s^{\frac{p-1}{2}}I_m+(p-1)s^{\frac{p-3}{2}}\mathbf{u}\mathbf{u}^T.
\end{equation*}
The tangential eigenvalue is $s^{\frac{p-1}{2}}$ and the radial eigenvalue is $(\varepsilon^2+p\rho^2)s^{\frac{p-3}{2}}$. They are positive, the radial one does not exceed the tangential one, and both are at most $A_{\mathrm{max}}=\varepsilon^{p-1}$. Thus all energy theorems apply globally with this $A_{\mathrm{max}}$, without needing a pointwise range bound.

\subsection{Globally bounded nonlinear cross diffusion}

Two bounded examples can be constructed directly from convex potentials.
For $\kappa\geq0$, take
\begin{equation}\label{eq:saturationpsi}
	\Psi(\mathbf{u}) = \frac{1}{2}(u_1^2 + u_2^2) + \kappa\log\cosh(u_1-u_2),
	\quad H = I_2.
\end{equation}
Writing $z=u_1-u_2$ gives the coupled PDE
\begin{equation*}
	(u_1)_t=\Delta(u_1+\kappa\tanh z),\quad
	(u_2)_t=\Delta(u_2-\kappa\tanh z),
\end{equation*}
and
\begin{equation*}
	A(\mathbf{u}) = I_2 + \kappa \sech^2 z
	\begin{pmatrix}1&-1\\-1&1\end{pmatrix},\quad
	I_2\preceq A \preceq (1+2\kappa)I_2.
\end{equation*}
Hence $A_{\mathrm{max}}=1+2\kappa$ is global. This gives a smooth coupled example for all orders considered, without an auxiliary range hypothesis.

To illustrate why a Euclidean Hessian is not necessary, instead set
\begin{equation}\label{eq:nonsympsi}
	H=\diag(1,\frac{1}{2}),\quad
	\Psi(\mathbf{u}) = \frac{1}{2}(u_1^2+u_2^2)+\kappa\log\cosh(u_1+u_2).
\end{equation}
Now $z=u_1+u_2$ and
\begin{equation*}
	(u_1)_t=\Delta(u_1+\kappa\tanh z),\quad
	(u_2)_t=2\Delta(u_2+\kappa\tanh z).
\end{equation*}
For $s=\sech^2 z$,
\begin{equation*}
	 A(\mathbf{u})=\begin{pmatrix}
		1+\kappa s&\kappa s\\
		2\kappa s&2+2\kappa s
	\end{pmatrix},
	\quad H A=\nabla^2\Psi.
\end{equation*}
The diffusion matrix is generally nonsymmetric, whereas its weighted form is symmetric positive definite. Since $H^{-\frac{1}{2}}\nabla^2\Psi H^{-\frac{1}{2}}$ is a diagonal matrix with entries $1,2$ plus a positive rank-one matrix of norm $3\kappa s$, the global upper bound $A_{\mathrm{max}}=2+3\kappa$ suffices. This system satisfies the hypotheses of \cref{thm:systems}, retaining the scalar stabilizer and identical componentwise discrete Laplacians.

\section{Concluding remarks}\label{sec:summary}

We have proved nonlinear energy stability for first- to third-order EIN time discretizations of nonlinear degenerate diffusion equations with a Kirchhoff transform, summarized in \cref{tab:bounds}. The nonlinear terms are evaluated explicitly, and every implicit stage uses a linear discrete backward heat solve. Under the stated coefficient bounds, all energy estimates are unconditional with respect to the time step and require no positive lower bound on $a(\cdot) \geq 0$. The methods require neither Newton--Raphson nor Picard iterations. Their spatial discretizations assume self-adjointness and positive semidefiniteness in the exact $L^2$ inner product, together with orthogonal projection of the Kirchhoff transform in the spatial discretization. A wide range of symmetric Galerkin discretizations can be used within this setting, including but not limited to symmetric realizations of CG, mixed FEM, BR1, BR2, the method of Brezzi et al., LDG, SIPG, UWDG, DDG, HDG, WG, spectral Galerkin, and mixed VEM. 

\begin{table}[htb]
	\centering
	\caption{Sufficient stabilization bounds. The coefficient bound $a_{\mathrm{max}}$ is required on the pointwise segments specified in each result.}
	\label{tab:bounds}
	\begin{tabular}{lll}
		\toprule
		Time discretization & Sufficient lower bound on $a_0$ & Energy estimate\\
		\midrule
		EIN1 & $\frac{1}{2}a_{\mathrm{max}}$ & \eqref{eq:energyinequality}\\
		Standard EIN2 & $\frac{2}{3-\sqrt{2}}a_{\mathrm{max}}$ & \eqref{eq:originalein2energy}\\
		EIN2-LWW & $\frac{1}{2}a_{\mathrm{max}}$ & \eqref{eq:betterein2energy}\\
		EIN3-FTY & $\frac{1}{2}a_{\mathrm{max}}$ & \eqref{eq:ftyenergy}\\
		EIN3-LWW & $a_{\mathrm{max}}$ & \eqref{eq:lwwenergy}\\
		Fixed-metric systems & Same bounds with $A_{\mathrm{max}}$ & \cref{thm:systems}\\
		\bottomrule
	\end{tabular}
\end{table}

For systems, a scalar convex potential in a fixed metric allows the same proof and componentwise linear implicit solves. The constant-mobility Onsager model gives a physical example with nonsymmetric diffusion, while the regularized vector fast-diffusion and bounded cross-diffusion examples admit global coefficient bounds. Porous-medium flow, plasma heat conduction, and constitutive laws with singular limits require a bound on the relevant stage ranges to select a finite common upper bound $a_{\mathrm{max}}$ or $A_{\mathrm{max}}$, respectively.

% \appendix
% \section{An example appendix} 
% \lipsum[71]

% \begin{lemma}
% Test Lemma.
% \end{lemma}

% \section*{Acknowledgments}
% We would like to acknowledge the assistance of volunteers in putting
% together this example manuscript and supplement.

\bibliographystyle{siamplain}
\bibliography{refs}
\end{document}

%% file: ex_shared.tex
\usepackage{amssymb}
\usepackage{mathtools}
\usepackage{bm}
\usepackage{booktabs}
\newsiamthm{assumption}{Assumption}
\newsiamremark{remark}{Remark}
\newsiamremark{example}{Example}
\newsiamremark{hypothesis}{Hypothesis}
\crefname{hypothesis}{Hypothesis}{Hypotheses}
\newsiamthm{claim}{Claim}
\newsiamremark{fact}{Fact}
\crefname{fact}{Fact}{Facts}

\headers{Energy stability of EIN time discretizations}{Yue Wu and Chi-Wang Shu}

\title{
  Energy stability of explicit--implicit--null time discretizations for nonlinear degenerate parabolic equations with state-dependent coefficients
  \thanks{
    Submitted to the editors DATE.
  }
}

\author{
  Yue Wu\thanks{Division of Applied Mathematics, Brown University, Providence, RI 02912, USA 
  (\email{yue\_wu3@brown.edu}, \email{chi-wang\_shu@brown.edu}).}
  \and Chi-Wang Shu\footnotemark[2]
}

\newcommand{\PV}{\Pi_{V_h}}
\newcommand{\R}{\mathbb{R}}

\renewcommand{\d}{\mathrm{d}}

\DeclareMathOperator{\diag}{diag}

\DeclareMathOperator{\sym}{sym}
\DeclareMathOperator{\sech}{sech}
\newcommand{\norm}[1]{\left\lVert#1\right\rVert}

\allowdisplaybreaks

%% file: refs.bib
@article{10.1006/jcph.1996.5572,
    title   = {A High-Order Accurate Discontinuous Finite Element Method for the Numerical Solution of the Compressible {N}avier--{S}tokes Equations},
    journal = {Journal of Computational Physics},
    volume  = {131},
    number  = {2},
    pages   = {267-279},
    year    = {1997},
    month   = {03},
    issn    = {0021-9991},
    doi     = {10.1006/jcph.1996.5572},
    author  = {Bassi, Francesco and Rebay, Stefano}
}

@book{10.1007/978-3-540-71041-7,
    author    = {Shen, Jie and Tang, Tao and Wang, Li-Lian},
    title     = {Spectral Methods},
    subtitle  = {Algorithms, Analysis and Applications},
    series    = {Springer Series in Computational Mathematics},
    volume    = {41},
    publisher = {Springer},
    address   = {Berlin, Heidelberg},
    year      = {2011},
    month     = {08},
    doi       = {10.1007/978-3-540-71041-7},
    isbn      = {978-3-540-71041-7},
    issn      = {2198-3712}
}

@article{10.1007/BF01389710,
    author  = {Brezzi, Franco and Douglas, Jr., Jim and Marini, Luisa Donatella},
    title   = {Two families of mixed finite elements for second order elliptic problems},
    journal = {Numerische Mathematik},
    volume  = {47},
    number  = {2},
    pages   = {217-235},
    year    = {1985},
    month   = {06},
    doi     = {10.1007/BF01389710},
    issn    = {0945-3245}
}

@inproceedings{10.1007/BFb0064470,
    author       = {Raviart, Pierre-Arnaud and Thomas, Jean-Marie},
    title        = {A mixed finite element method for 2-nd order elliptic problems},
    booktitle    = {Mathematical Aspects of Finite Element Methods},
    booksubtitle = {Proceedings of the Conference Held in Rome, December 10 - 12, 1975},
    editor       = {Galligani, Ilio and Magenes, Enrico},
    series       = {Lecture Notes in Mathematics},
    volume       = {606},
    publisher    = {Springer},
    address      = {Berlin, Heidelberg},
    pages        = {292-315},
    year         = {1977},
    doi          = {10.1007/BFb0064470},
    isbn         = {978-3-540-37158-8},
    issn         = {1617-9692}
}

@article{10.1007/s10915-019-01072-4,
    author  = {Wang, Haijin and Zhang, Qiang and Shu, Chi-Wang},
    title   = {Implicit--explicit local discontinuous {G}alerkin methods with generalized alternating numerical fluxes for convection--diffusion problems},
    journal = {Journal of Scientific Computing},
    volume  = {81},
    number  = {1},
    pages   = {2080-2114},
    year    = {2019},
    month   = {10},
    doi     = {10.1007/s10915-019-01072-4},
    issn    = {1573-7691}
}

@article{10.1007/s11425-018-9524-x,
    author  = {Wang, Haijin and Zhang, Qiang and Wang, Shiping and Shu, Chi-Wang},
    title   = {Local discontinuous {G}alerkin methods with explicit-implicit-null time discretizations for solving nonlinear diffusion problems},
    journal = {Science China Mathematics},
    volume  = {63},
    number  = {1},
    pages   = {183-204},
    year    = {2019},
    month   = {05},
    doi     = {10.1007/s11425-018-9524-x},
    issn    = {1869-1862}
}

@article{10.1007/s42967-023-00359-6,
    author  = {Tan, Meiqi and Cheng, Juan and Shu, Chi-Wang},
    title   = {The high-order variable-coefficient explicit-implicit-null method for diffusion and dispersion equations},
    journal = {Communications on Applied Mathematics and Computation},
    volume  = {7},
    number  = {1},
    pages   = {115-150},
    year    = {2024},
    month   = {05},
    doi     = {10.1007/s42967-023-00359-6},
    issn    = {2661-8893}
}

@inproceedings{10.1016/B978-0-12-358502-8.50009-8,
    author       = {Douglas, Jr., Jim and Dupont, Todd F.},
    title        = {Alternating-direction {G}alerkin methods on rectangles},
    booktitle    = {Numerical Solution of Partial Differential Equations--II},
    booksubtitle = {Proceedings of the Second Symposium on the Numerical Solution of Partial Differential Equations, SYNSPADE 1970, Held at the University of Maryland, College Park, Maryland, May 11--15, 1970: SYNSPADE 1970},
    chapter      = {6},
    editor       = {Hubbard, Bert},
    publisher    = {Academic Press},
    address      = {New York},
    pages        = {133-214},
    year         = {1971},
    doi          = {10.1016/B978-0-12-358502-8.50009-8},
    isbn         = {978-0-12-358502-8}
}

@article{10.1016/j.cam.2012.10.003,
    author  = {Wang, Junping and Ye, Xiu},
    title   = {A weak {G}alerkin finite element method for second-order elliptic problems},
    journal = {Journal of Computational and Applied Mathematics},
    volume  = {241},
    pages   = {103-115},
    year    = {2013},
    month   = {03},
    doi     = {10.1016/j.cam.2012.10.003},
    issn    = {1879-1778}
}

@article{10.1016/j.cnsns.2024.108094,
    author  = {Wang, Yating and Leung, Wing Tat},
    title   = {Partially explicit splitting scheme with explicit--implicit-null method for nonlinear multiscale flow problems},
    journal = {Communications in Nonlinear Science and Numerical Simulation},
    volume  = {136},
    pages   = {108094},
    year    = {2024},
    month   = {09},
    doi     = {10.1016/j.cnsns.2024.108094},
    issn    = {1007-5704}
}

@article{10.1016/j.compgeo.2026.108315,
    author  = {Wu, Xinyu and Cheng, Juan and Guo, Hui and Shu, Chi-Wang},
    title   = {An efficient high-order bound-preserving explicit--implicit-null time integration for the {R}ichards' equation in unsaturated geotechnical media},
    journal = {Computers and Geotechnics},
    volume  = {198},
    pages   = {108315},
    year    = {2026},
    month   = {10},
    doi     = {10.1016/j.compgeo.2026.108315},
    issn    = {0266-352X}
}

@article{10.1016/j.jcp.2004.03.001,
    author  = {Mattsson, Ken and Nordstr\"om, Jan},
    title   = {Summation by parts operators for finite difference approximations of second derivatives},
    journal = {Journal of Computational Physics},
    volume  = {199},
    number  = {2},
    pages   = {503-540},
    year    = {2004},
    month   = {09},
    doi     = {10.1016/j.jcp.2004.03.001},
    issn    = {0021-9991}
}

@article{10.1016/j.jcp.2014.01.013,
    author  = {Duchemin, Laurent and Eggers, Jens},
    title   = {The {E}xplicit--{I}mplicit--{N}ull method: Removing the numerical instability of {PDE}s},
    journal = {Journal of Computational Physics},
    volume  = {263},
    pages   = {37-52},
    year    = {2014},
    month   = {04},
    doi     = {10.1016/j.jcp.2014.01.013},
    issn    = {0021-9991}
}

@article{10.1016/j.jcp.2019.108921,
    author  = {Arbogast, Todd and Huang, Chieh-Sen and Zhao, Xikai},
    title   = {Finite volume {WENO} schemes for nonlinear parabolic problems with degenerate diffusion on non-uniform meshes},
    journal = {Journal of Computational Physics},
    volume  = {399},
    pages   = {108921},
    year    = {2019},
    month   = {12},
    doi     = {10.1016/j.jcp.2019.108921},
    issn    = {0021-9991}
}

@article{10.1016/j.jcp.2022.111314,
    author  = {Tan, Meiqi and Cheng, Juan and Shu, Chi-Wang},
    title   = {Stability of high order finite difference and local discontinuous {G}alerkin schemes with explicit-implicit-null time-marching for high order dissipative and dispersive equations},
    journal = {Journal of Computational Physics},
    volume  = {464},
    pages   = {111314},
    year    = {2022},
    month   = {09},
    doi     = {10.1016/j.jcp.2022.111314},
    issn    = {0021-9991}
}

@book{10.1016/S0168-2024_08_70184-X,
    author    = {Ciarlet, Philippe G.},
    title     = {The Finite Element Method for Elliptic Problems},
    edition   = {1},
    series    = {Studies in Mathematics and Its Applications},
    volume    = {4},
    year      = {1978},
    publisher = {North Holland},
    address   = {Amsterdam},
    isbn      = {978-0-444-85028-7},
    issn      = {0168-2024},
    numpages  = {529}
}

@article{10.1016/S0168-92749700056-1,
    author  = {Ascher, Uri Michael and Ruuth, Steven J. and Spiteri, Raymond John},
    title   = {Implicit-explicit {R}unge--{K}utta methods for time-dependent partial differential equations},
    journal = {Applied Numerical Mathematics},
    volume  = {25},
    number  = {2-3},
    pages   = {151-167},
    year    = {1997},
    month   = {11},
    doi     = {10.1016/S0168-9274(97)00056-1},
    issn    = {0168-9274}
}

@incollection{10.1016/S1570-86590007005-8,
    author    = {Eymard, Robert and Gallou\"et, Thierry and Herbin, Rapha\`ele},
    title     = {Finite volume methods},
    booktitle = {Solution of Equation in $\mathbb R^n$ (Part 3), Techniques of Scientific Computing (Part 3)},
    series    = {Handbook of Numerical Analysis},
    editor    = {Ciarlet, Philippe G. and Lions, Jacques-Louis},
    volume    = {7},
    pages     = {713-1018},
    publisher = {Elsevier},
    year      = {2000},
    doi       = {10.1016/S1570-8659(00)07005-8},
    issn      = {1570-8659},
    isbn      = {978-0-444-50350-3}
}

@article{10.1051/m2an/2013138,
    author  = {Brezzi, Franco and Falk, Richard Steven and Marini, Luisa Donatella},
    title   = {Basic principles of mixed virtual element methods},
    journal = {ESAIM: Mathematical Modelling and Numerical Analysis},
    volume  = {48},
    number  = {4},
    pages   = {1227-1240},
    year    = {2014},
    doi     = {10.1051/m2an/2013138},
    issn    = {2804-7214}
}

@article{10.1051/m2an/2015067,
    author  = {Beir\~ao da Veiga, Louren{\c c}o and Brezzi, Franco and Marini, Luisa Donatella and Russo, Alessandro},
    title   = {Mixed virtual element methods for general second order elliptic problems on polygonal meshes},
    journal = {ESAIM: Mathematical Modelling and Numerical Analysis},
    volume  = {50},
    number  = {3},
    pages   = {727-747},
    year    = {2016},
    doi     = {10.1051/m2an/2015067},
    issn    = {2804-7214}
}

@article{10.1063/1.1745010,
    author  = {Richards, Lorenzo Adolph},
    title   = {Capillary conduction of liquids through porous mediums},
    journal = {Physics},
    volume  = {1},
    number  = {5},
    pages   = {318-333},
    year    = {1931},
    month   = {11},
    doi     = {10.1063/1.1745010},
    issn    = {0148-6349}
}

@article{10.1063/1.1745233,
    author  = {Muskat, Morris},
    title   = {The flow of compressible fluids through porous media and some problems in heat conduction},
    journal = {Physics},
    volume  = {5},
    number  = {3},
    pages   = {71-94},
    year    = {1934},
    month   = {03},
    doi     = {10.1063/1.1745233},
    issn    = {0148-6349}
}

@article{10.1090/mcom/3927,
    author  = {Chen, Yuan and Xing, Yulong},
    title   = {Optimal error estimates of ultra-weak discontinuous {G}alerkin methods with generalized numerical fluxes for multi-dimensional convection-diffusion and biharmonic equations},
    journal = {Mathematics of Computation},
    volume  = {93},
    number  = {349},
    pages   = {2135-2183},
    year    = {2024},
    month   = {09},
    doi     = {10.1090/mcom/3927},
    issn    = {1088-6842}
}

@article{10.1090/mcom/3950,
    author  = {Fu, Zhaohui and Tang, Tao and Yang, Jiang},
    title   = {Energy diminishing implicit-explicit {R}unge--{K}utta methods for gradient flows},
    journal = {Mathematics of Computation},
    volume  = {93},
    number  = {350},
    pages   = {2745-2767},
    year    = {2024},
    month   = {11},
    doi     = {10.1090/mcom/3950},
    issn    = {1088-6842}
}

@article{10.1090/mcom/4167,
    author  = {Yu, Kaichang and Cheng, Juan and Liu, Yuanyuan and Shu, Chi-Wang},
    title   = {A {L}ax-{W}endroff type theorem of doubly conservative schemes for degenerate convection-diffusion equations},
    journal = {Mathematics of Computation},
    year    = {2026},
    pages   = {to appear},
    doi     = {10.1090/mcom/4167},
    issn    = {1088-6842}
}

@article{10.1090/S0025-5718-07-02045-5,
    author  = {Cheng, Yingda and Shu, Chi-Wang},
    title   = {A discontinuous {G}alerkin finite element method for time dependent partial differential equations with higher order derivatives},
    journal = {Mathematics of Computation},
    volume  = {77},
    number  = {262},
    pages   = {699-730},
    year    = {2008},
    month   = {04},
    doi     = {10.1090/S0025-5718-07-02045-5},
    issn    = {1088-6842}
}

@article{10.1090/S0025-5718-08-02123-6,
    author  = {Cockburn, Bernardo and Dong, Bo and Guzm\'{a}n, Johnny},
    year    = {2008},
    month   = {10},
    pages   = {1887-1916},
    title   = {A superconvergent {LDG}-hybridizable {G}alerkin method for second-order elliptic problems},
    volume  = {77},
    number  = {264},
    journal = {Mathematics of Computation},
    doi     = {10.1090/S0025-5718-08-02123-6},
    issn    = {1088-6842}
}

@article{10.1103/PhysRev.37.405,
    author  = {Onsager, Lars},
    title   = {Reciprocal relations in irreversible processes. {I}.},
    journal = {Physical Review},
    volume  = {37},
    number  = {4},
    pages   = {405-426},
    year    = {1931},
    month   = {02},
    doi     = {10.1103/PhysRev.37.405},
    issn    = {1536-6065}
}

@article{10.1103/PhysRev.38.2265,
    author  = {Onsager, Lars},
    title   = {Reciprocal relations in irreversible processes. {II}.},
    journal = {Physical Review},
    volume  = {38},
    number  = {12},
    pages   = {2265-2279},
    year    = {1931},
    month   = {12},
    doi     = {10.1103/PhysRev.38.2265},
    issn    = {1536-6065}
}

@article{10.1103/PhysRev.89.977,
    author  = {Spitzer, Lyman and H\"arm, Richard},
    title   = {Transport phenomena in a completely ionized gas},
    journal = {Physical Review},
    volume  = {89},
    number  = {5},
    pages   = {977-981},
    year    = {1953},
    month   = {03},
    doi     = {10.1103/PhysRev.89.977},
    issn    = {1536-6065}
}

@article{10.1137/070706616,
    author  = {Cockburn, Bernardo and Gopalakrishnan, Jayadeep and Lazarov, Raytcho Dimitrov},
    title   = {Unified Hybridization of Discontinuous {G}alerkin, Mixed, and Continuous {G}alerkin Methods for Second Order Elliptic Problems},
    journal = {SIAM Journal on Numerical Analysis},
    volume  = {47},
    number  = {2},
    pages   = {1319-1365},
    year    = {2009},
    month   = {02},
    doi     = {10.1137/070706616},
    issn    = {1095-7170}
}

@article{10.1137/0719052,
    author  = {Arnold, Douglas Norman},
    title   = {An interior penalty finite element method with discontinuous elements},
    journal = {SIAM Journal on Numerical Analysis},
    volume  = {19},
    number  = {4},
    pages   = {742-760},
    year    = {1982},
    month   = {08},
    doi     = {10.1137/0719052},
    issn    = {1095-7170}
}

@article{10.1137/0732037,
    author  = {Ascher, Uri Michael and Ruuth, Steven J. and Wetton, Brian Thomas Robinson},
    title   = {Implicit-explicit methods for time-dependent partial differential equations},
    journal = {SIAM Journal on Numerical Analysis},
    volume  = {32},
    number  = {3},
    pages   = {797-823},
    year    = {1995},
    month   = {06},
    doi     = {10.1137/0732037},
    issn    = {1095-7170}
}

@article{10.1137/0915089,
    author  = {Shen, Jie},
    title   = {Efficient spectral-{G}alerkin method {I}. Direct solvers of second- and fourth-order equations using {L}egendre polynomials},
    journal = {SIAM Journal on Scientific Computing},
    volume  = {15},
    number  = {6},
    pages   = {1489-1505},
    year    = {1994},
    month   = {11},
    doi     = {10.1137/0915089},
    issn    = {1095-7197}
}

@book{10.1137/1.9780898717938,
    author    = {Strikwerda, John Charles},
    title     = {Finite Difference Schemes and Partial Differential Equations},
    edition   = {2},
    series    = {Other Titles in Applied Mathematics},
    volume    = {88},
    publisher = {Society for Industrial and Applied Mathematics},
    address   = {Philadelphia, PA},
    year      = {2004},
    doi       = {10.1137/1.9780898717938},
    isbn      = {978-0-89871-793-8}
}

@article{10.1137/100791002,
    author  = {Liu, Yuanyuan and Shu, Chi-Wang and Zhang, Mengping},
    title   = {High order finite difference {WENO} schemes for nonlinear degenerate parabolic equations},
    journal = {SIAM Journal on Scientific Computing},
    volume  = {33},
    number  = {2},
    pages   = {939-965},
    year    = {2011},
    month   = {01},
    doi     = {10.1137/100791002},
    issn    = {1095-7197}
}

@article{10.1137/110839230,
    author  = {Zhang, Xiangxiong and Liu, Yuanyuan and Shu, Chi-Wang},
    title   = {Maximum-principle-satisfying high order finite volume weighted essentially nonoscillatory schemes for convection-diffusion equations},
    journal = {SIAM Journal on Scientific Computing},
    volume  = {34},
    number  = {2},
    pages   = {A627-A658},
    year    = {2012},
    month   = {01},
    doi     = {10.1137/110839230},
    issn    = {1095-7197}
}

@article{10.1137/S0036142901384162,
    author  = {Arnold, Douglas Norman and Brezzi, Franco and Cockburn, Bernardo and Marini, Luisa Donatella},
    title   = {Unified Analysis of Discontinuous {G}alerkin Methods for Elliptic Problems},
    journal = {SIAM Journal on Numerical Analysis},
    volume  = {39},
    number  = {5},
    pages   = {1749-1779},
    year    = {2002},
    month   = {01},
    doi     = {10.1137/S0036142901384162},
    issn    = {1095-7170}
}

@article{10.1137/S0036142997316712,
    author  = {Cockburn, Bernardo and Shu, Chi-Wang},
    title   = {The Local Discontinuous {G}alerkin Method for Time-Dependent Convection-Diffusion Systems},
    journal = {SIAM Journal on Numerical Analysis},
    volume  = {35},
    number  = {6},
    pages   = {2440-2463},
    year    = {1998},
    month   = {11},
    doi     = {10.1137/S0036142997316712},
    issn    = {1095-7170}
}

@article{10.4208/csiam-am.SO-2025-0095,
    author  = {Liao, Hong-lin and Wang, Xuping and Wen, Cao},
    title   = {Average energy dissipation rates of implicit-explicit {R}unge-{K}utta methods for gradient flow problems},
    journal = {CSIAM Transactions on Applied Mathematics},
    volume  = {8},
    number  = {1},
    pages   = {36-76},
    year    = {2027},
    month   = {06},
    doi     = {10.4208/csiam-am.SO-2025-0095},
    issn    = {2708-0579}
}

@article{10.4208/eajam.2022-271.090123,
    author  = {Tan, Meiqi and Cheng, Juan and Shu, Chi-Wang},
    title   = {Stability of spectral collocation schemes with explicit-implicit-null time-marching for convection-diffusion and convection-dispersion equations},
    journal = {East Asian Journal on Applied Mathematics},
    volume  = {13},
    number  = {3},
    pages   = {464-498},
    year    = {2023},
    month   = {06},
    doi     = {10.4208/eajam.2022-271.090123},
    issn    = {2079-7370}
}

@article{10.4208/jcm.1307-m4273,
    author  = {Vidden, Chad and Yan, Jue},
    title   = {A new direct discontinuous {G}alerkin method with symmetric structure for nonlinear diffusion equations},
    journal = {Journal of Computational Mathematics},
    volume  = {31},
    number  = {6},
    pages   = {638-662},
    year    = {2013},
    month   = {10},
    doi     = {10.4208/jcm.1307-m4273},
    issn    = {1991-7139}
}

@article{10.4208/nmtma.OA-2024-0076,
    author  = {Li, Yumiao and Yang, Yin and Liu, Tiegang and Yuan, Weixiong and Cao, Kui},
    title   = {The direct discontinuous {G}alerkin method with explicit-implicit-null time discretizations for nonlinear diffusion equations},
    journal = {Numerical Mathematics: Theory, Methods and Applications},
    volume  = {18},
    number  = {1},
    pages   = {175-198},
    year    = {2025},
    month   = {04},
    doi     = {10.4208/nmtma.OA-2024-0076},
    issn    = {2079-7338}
}

@inproceedings{BassiEtAl1997BR2,
    author    = {Bassi, Francesco and Rebay, Stefano and Mariotti, Gianluca and Pedinotti, Savini and Savini, Marco},
    title     = {A high-order accurate discontinuous finite element method for inviscid and viscous turbomachinery flows},
    booktitle = {Proceedings of 2nd European Conference on Turbomachinery, Fluid Dynamics and Thermodynamics},
    editor    = {Decuypere, Roland and Dibelius, G\"unther},
    publisher = {Technologisch Instituut},
    address   = {Antwerpen, Belgium},
    pages     = {99-108},
    year      = {1997},
    month     = {03},
    issn      = {2313-0067}
}

@inproceedings{BrezziEtAl1999Diffusion,
    author    = {Brezzi, Franco and Manzini, Marco and Marini, Luisa Donatella and Pietra, Paola and Russo, Alessandro},
    title     = {Discontinuous finite elements for diffusion problems},
    booktitle = {Francesco Brioschi (1824--1897) Convegno di Studi Matematici},
    series    = {Incontro di Studio},
    number    = {16},
    publisher = {Istituto Lombardo, Accademia di Scienze e Lettere},
    address   = {Milano},
    pages     = {197-217},
    year      = {1999},
}

@book{DGM1962,
    author    = {de Groot, Sybren Ruurds and Mazur, Peter},
    title     = {Non-Equilibrium Thermodynamics},
    publisher = {North-Holland},
    address   = {Amsterdam},
    year      = {1962}
}

@book{Kirchhoff1894,
    author    = {Kirchhoff, Gustav Robert},
    title     = {Vorlesungen \"uber die Theorie der W\"arme},
    series    = {Vorlesungen \"uber mathematische Physik},
    volume    = {4},
    year      = {1894},
    publisher = {Teubner},
    address   = {Leipzig}
}

@unpublished{YCS2026DoCFV,
    author = {Yu, Kaichang and Cheng, Juan and Shu, Chi-Wang},
    title  = {A high-order doubly conservative weighted essentially non-oscillatory finite volume scheme for degenerate convection-diffusion equations on unstructured meshes},
    year   = {2026},
    note   = {Submitted to SIAM Journal on Scientific Computing},
    issn   = {1095-7197}
}
